\newtheorem{theorem}{Theorem}[section]
\newtheorem{proposition}[theorem]{Proposition}
\theoremstyle{definition}
\newtheorem{remark}[theorem]{Remark}
\newtheorem{subsec}[theorem]{}
\numberwithin{equation}{section}
\begin{document}

\hfuzz=13pt
\title[An exact sequence for the graded Picent]{An exact sequence for the graded Picent}
\author[A. Marcus]{Andrei Marcus}
\author[V. A. Minu\cb{t}\u{a}]{Virgilius-Aurelian Minu\cb{t}\u{a}}
\address{\parbox{\linewidth}{Babe\cb{s}-Bolyai University,\\
Faculty of Mathematics and Computer Science,\\
Department of Mathematics,\\
1 Kog\u{a}lniceanu Street, 400084,\\
Cluj-Napoca, Romania\vspace{0.5cm}}}
\email{marcus@math.ubbcluj.ro}
\email{virgilius.minuta@ubbcluj.ro}


\subjclass{16W50, 16D90, 16S35, 20C20}
\keywords{Group graded algebras, crossed products, centralizer subalgebra, Morita equivalences, Picard groups, Picent groups}

\begin{abstract}  To a strongly $G$-graded algebra $A$ with $1$-component $B$ we associate the group $\mathrm{Picent}^{\mathrm{gr}}(A)$ of isomorphism classes of invertible $G$-graded $(A,A)$-bimodules over the centralizer of $B$ in $A$. Our main result is a $\mathrm{Picent}$ version of the Beattie-del R\'{\i}o exact sequence, involving Dade's group $G[B]$, which relates $\mathrm{Picent}^{\mathrm{gr}}(A)$, $\mathrm{Picent}(B)$, and group cohomology.
\end{abstract}

\maketitle

\section{Introduction}

In this paper we introduce a notion of $G$-graded $\mathrm{Picent}$ group associated to a strongly $G$-graded algebra, and we establish an exact sequence linking this group and the $\mathrm{Picent}$ group of the $1$-component.

To explain our results, let us introduce some notation. Here $G$ is a finite group, $k$ a commutative ring, and we assume that the $k$-algebra $A$ is a $G$-graded crossed product with $1$-component $B$. There is an inertia group denoted by $G[B]$, introduced by E. C. Dade in \cite[(2.9)]{art:Dade1973} as the largest (normal) subgroup $H$ of $G$ such that the $G$-acted algebra $\mathcal{C}:=\mathrm{C}_A(B)_H$ is also a crossed product (see  \ref{ss:isoFR}). Let $Z=\mathrm{U}(\mathrm{Z}(B))$ and $H=G[B]$.

We define in Section 3 the group $\mathrm{Picent}^{\mathrm{gr}}(A)$ of the isomorphism classes of $G$-graded invertible $(A,A)$-bimodules over $\mathcal{C}$ (see \ref{ss:Pic_gr_and_Picent_gr_and_bimodules_over_C}). We show in Theorem \ref{th:main_exact_sequnce} that there is an exact sequence of groups
\[\xymatrix@C+=1cm{
1 \ar@{->}[r] & \mathrm{H}^1(G/H, Z) \ar@{->}[r]^{\Phi} & \mathrm{Picent}^{\mathrm{gr}}(A)\ar@{->}[r]^{\Psi} & \mathrm{Picent}(B)^{G/H}\ar@{->}[r]^{\Theta} & \mathrm{H}^2(G/H, Z).
}\]
This is a Picent version of the Beattie-del R\'{\i}o exact sequence from \cite{art:delRio1996}, where the Picard groups are over $k$. Note that in \cite{art:delRio1996}, the third map of the sequence is not a group homomorphisms, and the group $H$ does not play any role. Moreover, we show in Theorem \ref{th:diagram} that there is an interesting connection between the two exact sequences,  which involves the 5-term exact sequence associated to the normal subgroup $H$ of $G$.

Our proof of Theorem \ref{th:main_exact_sequnce} builds on the ideas of the proof given in \cite{art:Marcus1998} for the Beattie-del Rio exact sequence, which is based on the methods developed by Dade in \cite{art:Dade1981}. One of our motivations to revisit this result is the recent interest in the Picard groups of group algebras, see for instance, \cite{art:BKL2020} and \cite{art:EL2020}.

This paper is organized as follows. In Section 2 we recall some facts on crossed products and group cohomology. In Section 3 we introduce the group $\mathrm{Picent}^{\mathrm{gr}}(A)$ and prove several of its properties, which are generalizations of some well-known properties of the usual Picent group.
Our main results are in Section 4. We already mentioned two of them. Theorem \ref{th:butterfly} states that the exact sequence from Theorem \ref{th:main_exact_sequnce} only depends on the action on $B$ of the group of homogeneous units of $A$. This is closely related to Sp\"ath's ``butterfly theorem", see \cite[Theorem 2.16]{ch:Spath2018}, and also \cite[Proposition 4.1 and Theorem 4.2]{art:MM1}, respectively \cite[Proposition 3.8 and Theorem 3.9]{art:MM2}. In fact, trying to generalize these results was another motivation for the current paper.

Concerning our basic assumptions, all rings in this paper are associative with identity, and all modules are left (unless otherwise specified) and unital. We refer the reader to \cite{book:Reiner2003} for results on Picard groups, to \cite{book:Neukirch2008} for group cohomology, and to \cite{book:Marcus1999} for group graded algebras.

\section{Preliminaries}

\begin{subsec}\label{subsec:Groups} Let $G$  be a finite group, let $k$ be a commutative ring, and let $A$ be a crossed product of the $k$-algebra $B$ with $G$. We denote by $\mathrm{hU}(A)$ the group of homogeneous units of $A$. For all $g\in G$, we choose $u_g\in \mathrm{hU}(A)\cap A_g$.
\end{subsec}

\begin{subsec} \label{ss:isoFR} Consider the centralizer $\mathrm{C}_{A}(B)$ in $A$ of the $1$-component $B$. We know that $\mathrm{C}_{A}(B)$ is a $G$-graded $G$-algebra, and let
\[ G[B]:=\{  g\in   G\mid A_{  g}\simeq B \textrm{ as } (B,B)\textrm{-bimodules}\}\]
be the inertia group of $B$ as a $(B,B)$-bimodule. Then it is well known that $G[B]$ is a normal subgroup of $G$, and
\[\mathcal{C}:=\mathrm{C}_A(B)_{G[B]}=\bigoplus_{  h\in   G[B]}\mathrm{C}_A(B)_{ h}\]
is a strongly $G[B]$-graded $G$-acted subalgebra of $\mathrm{C}_A(B)$. Actually, $G[B]$ is the largest subgroup $H$ of $G$ such that $\mathrm{C}_A(B)_H$ is strongly graded. The action of $G$ on $\mathcal{C}$ is given by $$\tensor*[^{g}]{c}{}=u_g c u_{g}^{-1},$$ for any $c\in\mathcal{C}$ and $g\in G$. Note that this does not depend on the choice of $u_g\in \mathrm{hU}(A)\cap A_{g}$.

We denote $H:=G[B]$, $\mathcal{Z}:=\mathcal{C}_1=\mathrm{Z}(B)$ and $Z=\mathrm{U}(\mathcal{Z)}$.
\end{subsec}
\goodbreak

\begin{subsec} \label{ss:cvasihuge_diagram}
We recall alternative definitions of $H$.
We consider the group homomorphism $\varepsilon:\mathrm{hU}(A)\to \mathrm{Aut}_k(B)$, $\varepsilon(u_g)(b)=u_gbu_g^{-1}$, for all $b\in B$ and $g\in G$. Let $\bar{\varepsilon}:G\to\mathrm{Out}_k(B)$ be the group homomorphism induced by $\varepsilon$. Note that the definition of $\bar{\varepsilon}$ does not depend on the choice of $u_g\in \mathrm{hU}(A)\cap A_{g}$. We obtain the following commutative diagram of groups and group homomorphisms with exact rows and columns, where $\pi:\mathrm{Aut}_k(B)\to\mathrm{Out}_k(B)$ is the canonical projection:
\[
\xymatrix@C+=2cm{
1 \ar@{->}[r] & Z \ar@{->}[r] \ar@{->}[d] & \mathrm{C}_{\mathrm{hU}(A)}(B) \ar@{->}[r] \ar@{->}[d] & H \ar@{->}[r] \ar@{->}[d] & 1\\
1 \ar@{->}[r] & \mathrm{U}(B) \ar@{->}[r] \ar@{->}[d] & \mathrm{hU}(A) \ar@{->}[r] \ar@{->}[d]^{\varepsilon}  & G \ar@{->}[r] \ar@{->}[d]^{\bar{\varepsilon}} & 1\\
1 \ar@{->}[r] & \mathrm{Int}(B) \ar@{->}[r] & \mathrm{Aut}_k(B) \ar@{->}[r]^{\pi} & \mathrm{Out}_k(B) \ar@{->}[r] & 1.
}
\]

We have that
$$
\begin{array}{rclcl}
H & = & \mathrm{Ker}\,\bar{\varepsilon} & = & \{g\in G\mid \mathcal{C}_g\mathcal{C}_{g^{-1}}=\mathcal{Z}\},
\end{array}$$
and we obtain the group isomorphisms $\mathrm{Im}\,\bar{\varepsilon}\simeq G/H$ and
\[
    H  \simeq  \mathrm{C}_{\mathrm{hU}(A)}(B)/\mathrm{U}(\mathcal{Z})  \simeq \mathrm{C}_{\mathrm{hU}(A)}(B)\mathrm{U}(B)/\mathrm{U}(B).
\]
Furthermore, since $G\simeq \mathrm{hU}(A)/\mathrm{U}(B)$, we also obtain \[G/H\simeq \mathrm{hU}(A)/\mathrm{C}_{\mathrm{hU}(A)}(B)\mathrm{U}(B).\]

Note that $H$ acts trivially on $\mathcal{Z}$. Indeed, if $h\in H$ and $z\in \mathcal{Z}$, then $\tensor*[^{h}]{z}{}=u_hzu_{h}^{-1}$, where we may choose $u_h\in \mathrm{C}_{\mathrm{hU}(A)}(B)$, so the statement follows.
\end{subsec}

\begin{subsec} \label{ss:first_column}
In particular, $Z$ is a trivial $H$-module, hence a $G/H$-module. Recall (see for example \cite[Proposition 1.6.7]{book:Neukirch2008}) that we have the 5-term exact sequence:
\[\xymatrix@C+=1cm{
1 \ar@{->}[r] & \mathrm{H}^1(G/H, Z) \ar@{->}[r]^{\mathrm{Inf}} & \mathrm{H}^1(G, Z) \ar@{->}[r]^{\hspace{-5mm}\mathrm{Res}} & \mathrm{H}^1(H, Z)^{G/H} \ar@{->}[r]^{\mathrm{Tg}} & \mathrm{H}^2(G/H, Z) \ar@{->}[r]^{\mathrm{Inf}} & \mathrm{H}^2(G, Z).
}\]
In our situation, because $Z$ is a trivial $H$-module, we know from \cite[Corollary 6.4.6]{book:Weibel1994} that
$$\mathrm{H}^1(H, Z)=\mathrm{Hom}_{\mathrm{Grp}}(H,Z),$$
and the action of $G$ on $\mathrm{Hom}_{\mathrm{Grp}}(H,Z)$ is
\[{}^{g}\beta : H\to Z,\qquad ({}^{g}\beta)(h)={}^{g}(\beta({}^{g^{-1}}h)),\text{ for all }h\in H.\]
It is straightforward to see that $H$ acts trivially on $\mathrm{Hom}_{\mathrm{Grp}}(H,Z)$, hence $\mathrm{Hom}_{\mathrm{Grp}}(H,Z)$ is $G/H$-acted.
\end{subsec}

\section{Graded Picent} \label{sec:a}
\begin{subsec}
We denote by $\mathrm{Pic}_k(B)$ the set of isomorphism classes $[P]$ of invertible $(B,B)$-bimodules $P$ over $k$. Together with the operation
\[
[P]\cdot [Q]=[P\otimes_B Q],
\]
$\mathrm{Pic}_k(B)$ forms a group, called the Picard group of $B$ relative to $k$ (see for example \cite[Definition 37.5 (i)]{book:Reiner2003}). The inverse $[P]^{-1}$ of [P] is given by the $B$-dual $[\mathrm{Hom}_B(P,B)]$ of $P$.

Recall (see \cite[Definition 37.5 (ii)]{book:Reiner2003}) that $\mathrm{Picent}(B)$ is the set of isomorphism classes $[P]$ of invertible $(B,B)$-bimodules $P$ satisfying $zp=pz$ for all $p\in P$ and $z\in \mathcal{Z}$ (we will say that $P$ is a bimodule over $\mathcal{Z}$). Then $\mathrm{Picent}(B)$ is a subgroup of $\mathrm{Pic}_k(B)$.
\end{subsec}

\begin{subsec} \label{ss:Pic_gr_and_Picent_gr_and_bimodules_over_C}
We define the groups $\mathrm{Pic}^{\mathrm{gr}}_k(A)$ and  $\mathrm{Picent}^{\mathrm{gr}}(A)$ in a similar manner. Let $\mathrm{Pic}^{\mathrm{gr}}_k(A)$ be the group of isomorphism classes $[\tilde{P}]$ of $G$-graded invertible $(A,A)$-bimodules $\tilde{P}$ over $k$; and let $\mathrm{Picent}^{\mathrm{gr}}(A)$ be the subgroup of $\mathrm{Pic}^{\mathrm{gr}}_k(A)$, consisting of isomorphism classes $[\tilde{P}]$ of $G$-graded $(A,A)$-bimodules $\tilde{P}$ satisfying:
\begin{enumerate}
\item $\tilde{P}\otimes_A\tilde{P}^{\ast}\simeq A$ and $\tilde{P}^{\ast}\otimes_A \tilde{P}\simeq A$ as $G$-graded $(A,A)$-bimodules, where $\tilde{P}^{\ast}$ is the $A$-dual of $\tilde{P}$.
\item $\tilde{p}_g c=\tensor*[^g]{c}{}\tilde{p}_g$ for all $g\in G$, $c\in \mathcal{C}$, $\tilde{p}_g\in \tilde{P}_g$.
\end{enumerate}

Recall from \cite[Remark 2.6]{art:MM2} that condition (2) is equivalent to
\begin{enumerate}
\item[(2')] $p c=c p$ for all $c\in C$ and $p\in\tilde{P}_1$.
\end{enumerate}
Such bimodules are called in \cite[Definition 2.5]{art:MM2} $G$-graded $(A,A)$-bimodules over $\mathcal{C}$ and they form a category denoted by $A\textrm{-}\mathrm{Gr}/\mathcal{C}\textrm{-}A$.
\end{subsec}

\begin{subsec}
We recall some further notions and results from \cite[Section 2]{art:MM2}.
Denote $$\Delta^{\mathcal{C}}=\Delta(A\otimes_{\mathcal{C}} A^{\mathrm{op}})=\bigoplus_{g\in G}A_g\otimes_{\mathcal{C}}A_g^{\mathrm{op}}.$$
Then, by \cite[Propositions 2.10 and 2.11]{art:MM2}, the functors:
$$(A\otimes_{\mathcal{C}}A^{\mathrm{op}})\otimes_{\Delta^{\mathcal{C}}}-,\,A\otimes_B -,\,-\otimes_B A : \Delta^{\mathcal{C}}\textrm{-}\mathrm{mod} \to A\textrm{-}\mathrm{Gr}/\mathcal{C}\textrm{-}A,$$
are naturally isomorphic equivalences of categories, with inverse taking the 1-component, $(-)_1$, and are compatible with tensor products.
\end{subsec}

\begin{subsec}
We denote
\[\mathrm{Aut}^{\mathrm{gr}}_{\mathcal{C}}(A)=\{\alpha\in\mathrm{Aut}_k(A)\mid \alpha(A_g)=A_g \text{ for all } g\in G, \text{ and }\alpha(c)=c \text{ for all }  c\in\mathcal{C}\}.\]
Let $\tilde{P}$ be a $G$-graded $(A,A)$-bimodule over $\mathcal{C}$. For $\alpha,\beta\in\mathrm{Aut}^{\mathrm{gr}}_{\mathcal{C}}(A)$, we define a new $G$-graded $(A,A)$-bimodule over $\mathcal{C}$, where $_{\alpha}\tilde{P}_{\beta}=\tilde{P}$ as $k$-modules and with multiplication given by $$a\cdot \tilde{p}\cdot a'=\alpha(a)\tilde{p}\beta(a').$$

For $u\in\mathrm{U}(A)$, denote $\iota_u:A\to A$, $\iota_u(a)=uau^{-1}$. Denote
$$\mathrm{Int}_1(A)=\{\iota_u\mid u\in\mathrm{U}(B)\}.$$
\end{subsec}

The next result generalizes Theorems (37.14) and (37.16) from \cite{book:Reiner2003}.

\begin{proposition} With the above notations, we have:

\rm{1)} There is an exact sequence of groups
\[
\xymatrix{
1 \ar@{->}[r] & \mathrm{Int}_1(A) \ar@{->}[r] & \mathrm{Aut}^{\mathrm{gr}}_{\mathcal{C}}(A) \ar@{->}[r]^{\hspace{-0.3cm}\varphi} & \mathrm{Picent}^{\mathrm{gr}}(A),
}\]
where $\varphi(\alpha)=[\tensor*[_{1}]{A}{_{\alpha}}]$.

 \rm{2)} If $[\tilde{P}]$, $[\tilde{Q}]\in \mathrm{Picent}^{\mathrm{gr}}(A)$, then $\tilde{P}\simeq \tilde{Q}$ as a $G$-graded  $(A,\mathcal{C})$-bimodule over $\mathcal{C}$ if and only if there is $\alpha \in \mathrm{Aut}^{\mathrm{gr}}_{\mathcal{C}}(A)$ such that $\tilde{Q}\simeq {}_{1}\tilde{P}_{\alpha}$ as $G$-graded $(A,A)$-bimodules over $\mathcal{C}$. In particular, \[\mathrm{Im}\,\varphi=\{[\tilde{P}]\in \mathrm{Picent}^{\mathrm{gr}}(A)\mid\tensor*[]{\tilde{P}}{}\simeq \tensor*[]{A}{} \text{ as }G\text{-graded }(A,\mathcal{C})\text{-bimodules over }\mathcal{C}\}.\]
\end{proposition}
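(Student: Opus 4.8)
For part~1 the plan is to verify successively that $\varphi$ is well defined, that it is a group homomorphism, and that $\mathrm{Ker}\,\varphi=\mathrm{Int}_1(A)$. For the first point one checks that ${}_1A_\alpha\in\mathrm{Picent}^{\mathrm{gr}}(A)$: it is $G$-graded with $({}_1A_\alpha)_g=A_g$; its $A$-dual is ${}_1A_{\alpha^{-1}}$, so ${}_1A_\alpha\otimes_A{}_1A_{\alpha^{-1}}\simeq A\simeq{}_1A_{\alpha^{-1}}\otimes_A{}_1A_\alpha$ as $G$-graded $(A,A)$-bimodules; and condition~(2') holds because for $\tilde p_1\in A_1=B$ and $c\in\mathcal C\subseteq\mathrm C_A(B)$ one has $c\cdot\tilde p_1=c\tilde p_1=\tilde p_1 c=\tilde p_1\alpha(c)=\tilde p_1\cdot c$, using $\alpha|_{\mathcal C}=\mathrm{id}$. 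For the second point one uses the standard tensor-cancellation isomorphism ${}_1A_\alpha\otimes_A{}_1A_\beta\simeq{}_1A_{\alpha\beta}$ (cf.\ \cite[(37.14)]{book:Reiner2003}), which is immediately checked to be $G$-graded and over $\mathcal C$. For the kernel: any isomorphism $f\colon{}_1A_\alpha\to A$ of $G$-graded $(A,A)$-bimodules over $\mathcal C$ is left $A$-linear, so $f(a)=au$ with $u:=f(1)\in\mathrm U(A)$; right $A$-linearity forces $\alpha=\iota_u$, and gradedness forces $A_gu=A_g$ for all $g$, whence $u\in\mathrm U(B)$ and $\alpha\in\mathrm{Int}_1(A)$; conversely $\mathrm{Int}_1(A)\subseteq\mathrm{Aut}^{\mathrm{gr}}_{\mathcal C}(A)$ (an $\iota_u$ with $u\in\mathrm U(B)$ centralizes $\mathcal C$ and stabilizes each $A_g$) and visibly lies in $\mathrm{Ker}\,\varphi$. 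This yields exactness at $\mathrm{Int}_1(A)$ and at $\mathrm{Aut}^{\mathrm{gr}}_{\mathcal C}(A)$.

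For part~2 the direction ``$\Leftarrow$'' is immediate: in ${}_1\tilde P_\alpha$ the right $\mathcal C$-action coincides with that of $\tilde P$ since $\alpha$ fixes $\mathcal C$ pointwise, so an isomorphism $\tilde Q\simeq{}_1\tilde P_\alpha$ of $G$-graded $(A,A)$-bimodules over $\mathcal C$ restricts to an isomorphism $\tilde Q\simeq\tilde P$ of $G$-graded $(A,\mathcal C)$-bimodules over $\mathcal C$. Granting the equivalence, the ``in particular'' clause is the case $\tilde Q=A$ (which lies in $\mathrm{Picent}^{\mathrm{gr}}(A)$): then $\tilde P\simeq A$ as $G$-graded $(A,\mathcal C)$-bimodules over $\mathcal C$ iff $A\simeq{}_1\tilde P_\alpha$ for some $\alpha\in\mathrm{Aut}^{\mathrm{gr}}_{\mathcal C}(A)$, iff (applying the twist ${}_1(-)_{\alpha^{-1}}$ to both sides, which gives ${}_1A_{\alpha^{-1}}\simeq{}_1({}_1\tilde P_\alpha)_{\alpha^{-1}}=\tilde P$) $\tilde P\simeq{}_1A_{\alpha^{-1}}$, iff $[\tilde P]\in\mathrm{Im}\,\varphi$.

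The real content is the direction ``$\Rightarrow$'' of part~2. Given an isomorphism $f\colon\tilde P\to\tilde Q$ of $G$-graded $(A,\mathcal C)$-bimodules over $\mathcal C$, I would transport the right $A$-module structure of $\tilde Q$ along $f$: the rule $\tilde p\star a:=f^{-1}(f(\tilde p)a)$ makes $\tilde P$ into a $G$-graded $(A,A)$-bimodule $\tilde P'$ over $\mathcal C$ with $f\colon\tilde P'\xrightarrow{\sim}\tilde Q$ an isomorphism of such (compatibility with the left action and with the grading comes from $f$ being left $A$-linear and graded, and the ``over $\mathcal C$'' condition is inherited from $\tilde Q$), and $\star$ agrees with the original right action of $\mathcal C$ on $\tilde P$ since $f$ is right $\mathcal C$-linear. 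Now $\tilde P$ and $\tilde P'$ both lie in $\mathrm{Picent}^{\mathrm{gr}}(A)$ and share the same underlying $G$-graded left $A$-module $M$, which is invertible as an $(A,A)$-bimodule; hence right multiplication induces two ring isomorphisms $\rho,\rho'\colon A^{\mathrm{op}}\xrightarrow{\sim}\mathrm{End}_A(M)$ ($\rho$ from the original right action of $\tilde P$, $\rho'$ from $\star$), and $\alpha:=\rho^{-1}\circ\rho'$ is a $k$-algebra automorphism of $A$ satisfying $\tilde p\star a=\tilde p\,\alpha(a)$, i.e.\ $\tilde P'={}_1\tilde P_\alpha$. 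It remains to see $\alpha\in\mathrm{Aut}^{\mathrm{gr}}_{\mathcal C}(A)$: $\alpha$ fixes $\mathcal C$ because $\rho$ and $\rho'$ agree on $\mathcal C$ and $\rho$ is injective, and $\alpha$ stabilizes each $A_g$ because, with respect to the natural grading $\mathrm{End}_A(M)=\bigoplus_{g\in G}\mathrm{End}_A(M)_g$ by degree-shift, both $\rho$ and $\rho'$ map $A_g$ into $\mathrm{End}_A(M)_g$, hence (being bijective) onto it. Thus $\tilde Q\simeq\tilde P'={}_1\tilde P_\alpha$, as wanted.

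The step I expect to require the most care is this last one: the graded refinement of the Morita identification $A^{\mathrm{op}}\simeq\mathrm{End}_A(M)$ for an invertible $(A,A)$-bimodule $M$ (available from $M\otimes_AM^\ast\simeq A\simeq M^\ast\otimes_AM$), together with the check that it is homogeneous degree by degree so that the automorphism $\alpha$ comes out $G$-graded. An alternative would be to run part~2 entirely through the category equivalence $(-)_1\colon A\textrm{-}\mathrm{Gr}/\mathcal C\textrm{-}A\xrightarrow{\sim}\Delta^{\mathcal C}\textrm{-}\mathrm{mod}$ from \cite[Propositions 2.10 and 2.11]{art:MM2} and quote the ungraded \cite[(37.14) and (37.16)]{book:Reiner2003} for $\Delta^{\mathcal C}$; but one would then still have to identify $\mathrm{Aut}^{\mathrm{gr}}_{\mathcal C}(A)$ and $\mathrm{Int}_1(A)$ with the corresponding data for $\Delta^{\mathcal C}$, which costs comparable effort.
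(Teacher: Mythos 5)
Your proposal is correct and follows essentially the same route as the paper: part 1 reduces the kernel computation to $u=f(1)\in\mathrm U(B)$ exactly as in the paper's argument, and in part 2 your transport-of-structure construction of $\alpha$ via the two Morita identifications $A^{\mathrm{op}}\simeq\mathrm{End}_A(M)$ is the same computation the paper performs by conjugating right-multiplication operators through $f^{\ast}(\rho)=f\circ\rho\circ f^{-1}$ (your $\alpha$ is the inverse of theirs, a harmless convention). You are somewhat more explicit than the paper about the well-definedness of $\varphi$ and about why the comparison automorphism is grade-preserving, but the underlying ideas coincide.
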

\begin{proof}
\rm{1)} It is clear that $\varphi$ is well-defined and a group homomorphism (see the proof of \cite[Theorem 37.14]{book:Reiner2003}). If $\alpha=\iota_u\in \mathrm{Int}_1(A)$, then the map $f:A\to {}_{1}A_{\alpha}$, $f(a)=au^{-1}$ is an isomorphism of $G$-graded $(A,A)$-bimodules, hence $\alpha\in \mathrm{Ker}\,\varphi$. Conversely, let $\alpha\in \mathrm{Aut}^{\mathrm{gr}}_{\mathcal{C}}(A)$ such that $f:A\to {}_{1}A_{\alpha}$ is an isomorphism of $G$-graded $(A,A)$-bimodules, that is
$$f(axa')=af(x)\alpha(a'),$$
for all $a,a',x\in A$. Let $u:=f(1)$, so $u\in B$, since $1\in B$. We deduce that $u\in \mathrm{U}(B)$ and $f(a)=au$, for all $a\in A$. Moreover, we get that $\alpha=\iota_{u^{-1}}\in\mathrm{Int}_1(A)$.

\rm{2)}  If $\tilde{Q}\simeq {}_{1}\tilde{P}_{\alpha}$ as $G$-graded $(A,A)$-bimodules over $\mathcal{C}$ for some $\alpha\in\mathrm{Aut}^{\mathrm{gr}}_{\mathcal{C}}(A)$, then obviously, $\tilde{Q}\simeq \tilde{P}$ as $G$-graded $(A,\mathcal{C})$-bimodules over $\mathcal{C}$.

Conversely, let $f:\tilde{P}\to \tilde{Q}$ be an isomorphism of $G$-graded $(A,\mathcal{C})$-bimodules. Consider the $k$-algebra isomorphism
\[f^{\ast}:\mathrm{Hom}_A({}_A \tilde{P}, {}_A \tilde{P})\to \mathrm{Hom}_A({}_A \tilde{Q}, {}_A \tilde{Q}),\qquad f^{\ast}(\rho)=f\circ\rho\circ f^{-1}.\]
Since $\tilde{P}$ is an invertible $(A,A)$-bimodule, any element of $\mathrm{Hom}_A(\tilde{P},\tilde{P})$ is uniquely of the form
$$\rho_{\tilde{P},a}:\tilde{P}\to \tilde{P},\qquad\rho_{\tilde{P},a}(x)=xa,$$
for all $x \in \tilde{P}$, where $a\in A$.
Similarly, any element of $\mathrm{Hom}_A(\tilde{Q},\tilde{Q})$ is uniquely of the form
$$\rho_{\tilde{Q},a}:\tilde{Q}\to \tilde{Q},\qquad \rho_{\tilde{Q},a}(y)=ya,$$
for all $y \in \tilde{Q}$, where $a\in A$.
Let $a\in A$ and let $a'\in A$ such that $f^{\ast}(\rho_{\tilde{P},a})=\rho_{\tilde{Q},a'}$.
We define
$$\alpha:A\to A,\qquad\alpha(a)=a',$$
so $\alpha$ is the $k$-algebra automorphism of $A$ corresponding to $f^{\ast}$.
This means that $(f\circ \rho_{\tilde{P},a}\circ f^{-1})(y)=\rho_{\tilde{Q},\alpha(a)}(y)$, for all $y\in \tilde{Q}$, that is
\[f(xa)=f(x)\alpha(a),\tag{$\ast$}\label{eq:unu}\]
for all $x\in \tilde{P}$.  In (\ref{eq:unu}), let $x\in \tilde{P}_h$ and $a\in A_g$, where $h,g\in G$. Then $f(x)\in \tilde{Q}_h$ and $f(xa)\in \tilde{Q}_{hg}$, so we must have that $\alpha(a)\in A_g$, hence $\alpha$ is grade-preserving.
By (\ref{eq:unu}) and by the assumption on $f$, for any $c\in\mathcal{C}$, we have
\[f(x)c=f(xc)=f(x)\alpha(c),\]
for all $x\in \tilde{P}$, hence $yc=y\alpha(c)$ for all $y\in \tilde{Q}$. It follows that $\alpha(c)=c$ for all $c\in\mathcal{C}$, and therefore $\alpha\in\mathrm{Aut}^{\mathrm{gr}}_{\mathcal{C}}(A)$. Consequently, the equality (\ref{eq:unu}) implies that $f:\tilde{P}\to \tilde{Q}_{\alpha}$ is an isomorphism of $G$-graded $(A,A)$-bimodules over $\mathcal{C}$.

\end{proof}

\begin{subsec} \label{subsec:P_tilda}
Assume that $P$ is a left $B$-module. Denote $\tilde{P}:=A\otimes_B P$, so $\tilde{P}$ is $G$-graded $(A,\mathrm{End}_A(\tilde{P})^{\mathrm{op}})$-bimodule. We know by \cite[Lemma 3.1]{art:MM1}, that, there is a $k$-algebra map from $\mathcal{C}\to\mathrm{End}_A(\tilde{P})^{\mathrm{op}}$. By restriction of scalars, we may regard $\hat P:=\tilde{P}_{H}=A_{H}\otimes_B P$ as an $H$-graded $(\mathcal{C},\mathcal{C})$-bimodule, with 1-component $P$.

Therefore, we may introduce the  group $\mathrm{Pic}_{\mathcal{C}}(B)$  of isomorphism classes of invertible $(B,B)$-bimodules $P$ satisfying $cp=pc$, for all $c\in\mathcal{C}$,
 $p\in P$.

Note that there are also other $H$-graded $(A_H,\mathcal{C})$-bimodule structures on $A_H\otimes_B P$, which will be used in the proof of  Theorem \ref{th:diagram}.
\end{subsec}

If we start with a $(B,B)$-bimodule over $\mathcal{Z}$ we obtain the following isomorphisms, which are related to \cite[Proposition 4.1]{art:MM1} and \cite[Proposition 3.8]{art:MM2}.

\begin{proposition} \label{prop:eq} There are $G/H$-equivariant isomorphisms
\[\mathrm{Picent}(B)\simeq \mathrm{Pic}_{\mathcal{C}}(B)\simeq \mathrm{Picent}^{\mathrm{gr}}(A_{H}).\]
\end{proposition}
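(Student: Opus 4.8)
The plan is to establish the two isomorphisms separately and then check $G/H$-equivariance of the composite. For the first isomorphism $\mathrm{Picent}(B)\simeq\mathrm{Pic}_{\mathcal{C}}(B)$, the point is that although these two groups are defined by different commutation conditions — centralizing $\mathcal{Z}=\mathrm{Z}(B)$ versus centralizing all of $\mathcal{C}$ — they actually coincide as subgroups of $\mathrm{Pic}_k(B)$. The inclusion $\mathrm{Pic}_{\mathcal{C}}(B)\subseteq\mathrm{Picent}(B)$ is immediate since $\mathcal{Z}=\mathcal{C}_1\subseteq\mathcal{C}$. For the reverse inclusion, suppose $P$ is an invertible $(B,B)$-bimodule over $\mathcal{Z}$; I must show $cp=pc$ for every homogeneous $c\in\mathcal{C}_h$, $h\in H$. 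Here I would use the defining property of $H$ recalled in \ref{ss:cvasihuge_diagram}, namely $\mathcal{C}_h\mathcal{C}_{h^{-1}}=\mathcal{Z}$, together with the fact that $\mathcal{C}_h$ is an invertible $(\mathcal{Z},\mathcal{Z})$-bimodule (being the $h$-component of a crossed product of $\mathcal{Z}$ with $H$). The action of $\mathcal{C}_h$ on $P$ by left and right multiplication gives two bimodule maps $\mathcal{C}_h\otimes_{\mathcal{Z}}P\to P$ and $P\otimes_{\mathcal{Z}}\mathcal{C}_h\to P$; using invertibility of $P$ over $\mathcal{Z}$ (so $\mathrm{End}$ of $P$ as a $B$-module on either side is just $\mathcal{Z}$) one deduces these agree, i.e. $P$ is automatically a bimodule over $\mathcal{C}$.

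For the second isomorphism $\mathrm{Pic}_{\mathcal{C}}(B)\simeq\mathrm{Picent}^{\mathrm{gr}}(A_H)$, I would invoke the machinery already set up: by \ref{subsec:P_tilda} and the functors recalled after \ref{ss:Pic_gr_and_Picent_gr_and_bimodules_over_C}, applied with $G$ replaced by $H$ (note $H[B]=H$, so $\mathcal{C}$ is the relevant centralizer subalgebra for the strongly $H$-graded algebra $A_H$, and it is a crossed product of $\mathcal{Z}$ with $H$), the assignment $P\mapsto \hat P=A_H\otimes_B P$ sends an invertible $(B,B)$-bimodule over $\mathcal{C}$ to an $H$-graded $(A_H,A_H)$-bimodule over $\mathcal{C}$, with inverse functor $(-)_1$. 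I need to check this restricts to a bijection between the invertible objects, i.e. that $\hat P$ satisfies the $\mathrm{Picent}^{\mathrm{gr}}$ conditions (1) and (2) of \ref{ss:Pic_gr_and_Picent_gr_and_bimodules_over_C} exactly when $[P]\in\mathrm{Pic}_{\mathcal{C}}(B)$; condition (2) (equivalently (2')) translates directly into $cp=pc$ on the $1$-component, and invertibility of $\hat P$ over $A_H$ is equivalent to invertibility of $P$ over $B$ because the equivalence is compatible with tensor products and takes $A_H$ to the identity object. That the map is a group homomorphism follows from this tensor-compatibility.

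Finally, for $G/H$-equivariance: $G$ acts on $\mathrm{Picent}(B)$ via $\bar\varepsilon$ (conjugation by $u_g$, which gives a well-defined action of $G/H$ since $H$ acts by inner automorphisms implemented by units of $B$), on $\mathrm{Pic}_{\mathcal{C}}(B)$ similarly, and on $\mathrm{Picent}^{\mathrm{gr}}(A_H)$ by conjugating the bimodule structure by $u_g$ (a $G/H$-action by the same reasoning, and well-defined on isomorphism classes). I would check that the bijection $P\mapsto {}^gP$ corresponds under $P\mapsto A_H\otimes_B P$ to $\hat P\mapsto {}^g\hat P$, which is a direct computation with the explicit formulas $a\otimes p\mapsto a\otimes p$ after twisting both module structures by $\iota_{u_g}$; the key compatibility is that $u_g$ normalizes $A_H$ and $\mathcal{C}$ (since $H\trianglelefteq G$ and $\mathcal{C}$ is $G$-stable by \ref{ss:isoFR}). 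The main obstacle is the first inclusion $\mathrm{Picent}(B)\subseteq\mathrm{Pic}_{\mathcal{C}}(B)$: extracting from the abstract invertibility of $P$ over $\mathcal{Z}$ the fact that left and right multiplications by elements of $\mathcal{C}_h$ coincide requires carefully using that $\mathcal{C}_h$ is a rank-one free (or at least invertible) $\mathcal{Z}$-module on both sides and that $P$ has no extra $\mathcal{Z}$-bilinear endomorphisms; everything else is bookkeeping on top of the already-established categorical equivalences.
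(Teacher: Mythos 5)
Your overall architecture coincides with the paper's: form $\hat P=A_H\otimes_B P$, use the equivalence $A_H\otimes_B-$ with inverse $(-)_1$ and its compatibility with tensor products for the isomorphism $\mathrm{Pic}_{\mathcal{C}}(B)\simeq\mathrm{Picent}^{\mathrm{gr}}(A_H)$, and check equivariance via the conjugation action ${}^{g}[P]=[A_g\otimes_BP\otimes_BA_{g^{-1}}]$ together with the identification $A_{gH}\otimes_{A_H}\hat P\otimes_{A_H}A_{g^{-1}H}\simeq A_H\otimes_B(A_g\otimes_BP\otimes_BA_{g^{-1}})$. Those parts are fine and essentially identical to the paper.

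The gap is in the step you yourself single out as the main obstacle, $\mathrm{Picent}(B)\subseteq\mathrm{Pic}_{\mathcal{C}}(B)$, and it is not just a matter of care. First, for $h\neq 1$ there is no map ``$\mathcal{C}_h\otimes_{\mathcal{Z}}P\to P$'': left multiplication by $c\in\mathcal{C}_h$ sends $P=\hat P_1$ into $\hat P_h$, not into $P$, and a \emph{right} action of $\mathcal{C}_h$ on $\hat P$ does not exist a priori at all --- constructing it is precisely the content of the claim. What actually has to be produced is a right $A_H$-module structure on $A_H\otimes_B P$ extending the given right $B$-structure and restricting on each $\mathcal{C}_h$ to something compatible with the left action; the paper does this by writing $a_h=c_hb$ with $c_h\in\mathcal{C}_h\cap\mathrm{U}(\mathcal{C})$ (using that $\mathcal{C}$ is a crossed product, which is where $h\in H=G[B]$ enters) and setting $(a_g\otimes p)\cdot c_hb=a_gc_h\otimes pb$, the well-definedness check being exactly that two such decompositions differ by a unit $z\in Z$ which can be slid across $\otimes_B$ because $P$ is over $\mathcal{Z}$. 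Second, even if you set up the two maps into $\hat P_h$ correctly, the endomorphism-ring argument you propose ($\mathrm{Hom}_{(B,B)}(P,\hat P_h)$ being an invertible $\mathcal{Z}$-module) can only show that left and right multiplication by $c$ agree \emph{up to a unit of} $\mathcal{Z}$; it cannot force equality, since equality is a normalization that must be built into the definition of the right action rather than deduced. (Also, $\mathrm{End}_B({}_BP)\simeq B^{\mathrm{op}}$, not $\mathcal{Z}$; you want $\mathrm{End}_{(B,B)}(P)\simeq\mathcal{Z}$.) So the first isomorphism needs the explicit construction and well-definedness computation; the abstract comparison as you state it would not close the argument.
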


\begin{proof}
Let $[P]\in \mathrm{Picent}(B)$, so $P$ is a $(B,B)$-bimodule over $\mathcal{Z}$. We regard $\hat{P}:=A_{H}\otimes_B P$ as an $H$-graded $(A_{H},\mathcal{C})$-bimodule, as in \ref{subsec:P_tilda}. Then there is a unique structure of an $H$-graded $(A_{H},A_{H})$-bimodule over $\mathcal{C}$ on $\hat{P}$, which is defined as follows: Let $g,h\in H$, $a_g\in A_g$, $a_h\in A_h$ and $p\in P$. Since $\mathcal{C}$ is a crossed product, we may write
$a_h=c_h b$, where $c_h\in \mathcal{C}_h\cap \mathrm{U}(\mathcal{C})$ and $b\in B$. We define
\[(a_g\otimes_B p)\cdot a_h = (a_g\otimes_B p)\cdot c_h b := a_gc_h\otimes_B pb.\]
This is well-defined. Indeed, if $a_h=c'_hb'$, where $c'_h\in \mathcal{C}_h\cap \mathrm{U}(\mathcal{C})$ and $b'\in B$, we have that there exists $z\in Z$ such that $c'_h=c_hz$. From $a_h=c'_hb'=c_hb$, we obtain $zb'=b$. Now, we have
\[
\begin{array}{rcl}
a_gc'_h\otimes_B pb' & = & a_gc_hz\otimes_B pb'\\
& = & a_gc_h\otimes_B zpb'\\
& = & a_gc_h\otimes_B pzb' \qquad \text{ (since }P\text{ is over }\mathcal{Z})\\
& = & a_gc_h\otimes_B pb.
\end{array}
\]
In fact, $\hat{P}$ is a $H$-graded $(A_{H},A_{H})$-bimodule over $\mathcal{C}$. Indeed, let $g,h\in H$, $a_g\in A_g$, $c_h\in \mathcal{C}_h$ and $p\in P$. Because $\mathcal{C}$ is a crossed product, we may write
$c_h=u_h b$, where $u_h\in \mathcal{C}_h\cap \mathrm{U}(\mathcal{C})$ and $b\in B$. Note that, since $c_h\in \mathcal{C}_h$ and $u_h\in \mathcal{C}_h\cap \mathrm{U}(\mathcal{C})$, it is clear that $b\in \mathcal{Z}$. We have
\[
\begin{array}{rcl}
(a_g\otimes_B p)c_h & = & (a_g\otimes_B p) u_h b \\
& = &  a_gu_h\otimes_B pb\\
& = & a_gu_h\otimes_B bp \qquad \text{ (since }P\text{ is over }\mathcal{Z})\\
& = & a_gu_hb\otimes_B p\\
& = & a_gc_h\otimes_B p\\
& = & {}^{g}c_ha_g\otimes_B p \qquad \text{ (since }A\text{ is over }\mathcal{C})\\
& = & {}^{g}c_h(a_g\otimes_B p).
\end{array}
\]

Since $P$ induces a Morita autoequivalence over $\mathcal{Z}$ of $B$, by \cite[Theorem 3.3]{art:MM2}, we have that $\hat{P}$ induces an $H$-graded Morita autoequivalence over $\mathcal{C}$ of $A_{H}$.  Observe that, by the construction of $\hat{P}$, we obtain $pc=cp$ for all $p\in P$ and $c\in \mathcal{C}$. This gives the isomorphism $\mathrm{Picent}(B)\simeq \mathrm{Pic}_{\mathcal{C}}(B)$.

If $f:P\to Q$ is an isomorphism of $(B,B)$-bimodules, then $\hat{f}:=A_H\otimes_B f: A_H\otimes_B P \to A_H\otimes_B Q$ is an isomorphism of $H$-graded $(A_H,B)$-bimodules, and it is easy to see that it is actually an isomorphism of $H$-graded $(A_H,A_H)$-bimodules.  We deduce that the map
\[
\mathrm{Pic}_{\mathcal{C}}(B)\to \mathrm{Picent}^{\mathrm{gr}}(A_{H}),\qquad [P] \mapsto [\hat{P}]
\]
is well-defined and it is a bijection with inverse given by $[\hat{P}]\mapsto [\hat{P}_1]$.
By \cite[Proposition 2.11 (1)]{art:MM2} we deduce that this map is a group isomorphism.

The group $G$ acts on $\mathrm{Pic}_{k}(B)$, on $\mathrm{Picent}(B)$, and on $\mathrm{Pic}_{\mathcal{C}}(B)$ by defining $\tensor*[^{g}]{[P]}{}=[A_g\otimes_B P\otimes_B A_{g^{-1}}]$. The above argument shows that $H$ acts trivially on $\mathrm{Pic}_{\mathcal{C}}(B)$, hence $G/H$ acts on $\mathrm{Pic}_{\mathcal{C}}(B)$.
Similarly, $G/H$ acts on $\mathrm{Picent}^{\mathrm{gr}}(A_H)$ by defining
\[{}^{gH}[\hat{P}]:=[A_{gH}\otimes_{A_H}\hat{P}\otimes_{A_H}A_{g^{-1}H}].\]
But, $A_{gH}\otimes_{A_H}\hat{P}\otimes_{A_H}A_{g^{-1}H}\simeq A_H\otimes_B (A_g\otimes_B P \otimes_B A_{g^{-1}})$ as $H$-graded $(A_H,A_H)$-bimodules, hence the isomorphisms from the statement are $G/H$-equivariant.
\end{proof}

\begin{subsec} We denote by $\mathrm{Aut}_{kG}^{\mathrm{gr}}(\mathcal{C})$ the set of all $k$-linear automorphisms of $\mathcal{C}$, which preserve gradings and are also $G$-algebra maps.
\end{subsec}

\begin{proposition} \label{prop:gr_pic_action}
There is an exact sequence of groups:
\[
\xymatrix@C+=1cm{
1 \ar@{->}[r] & \mathrm{Picent}^{\mathrm{gr}}(A) \ar@{->}[r] & \mathrm{Pic}^{\mathrm{gr}}_{k}(A) \ar@{->}[r]^{\tilde{\sigma}} & \mathrm{Aut}_{kG}^{\mathrm{gr}}(\mathcal{C}),
}
\]
where $\tilde{\sigma}([\tilde{P}])=\tilde{\sigma}_{\tilde{P}}:\mathcal{C}\to\mathcal{C}$ is an automorphism of $H$-graded $G$-acted $k$-algebras, determined by $\tilde{\sigma}_{\tilde{P}}(c)p=pc$ for all $p\in P:=\tilde{P}_1$ and $c\in\mathcal{C}$.
\end{proposition}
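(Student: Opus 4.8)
The plan is to construct the map $\tilde\sigma$ explicitly, check it lands in $\mathrm{Aut}_{kG}^{\mathrm{gr}}(\mathcal{C})$, verify that it is a group homomorphism, and finally identify its kernel with $\mathrm{Picent}^{\mathrm{gr}}(A)$. First I would take $[\tilde P]\in\mathrm{Pic}^{\mathrm{gr}}_k(A)$ and set $P:=\tilde P_1$. By \ref{subsec:P_tilda} and \cite[Lemma 3.1]{art:MM1} there is a canonical $k$-algebra map $\mathcal{C}\to\mathrm{End}_A(\tilde P)^{\mathrm{op}}$; since $\tilde P$ is $A$-invertible, $\mathrm{End}_A(\tilde P)^{\mathrm{op}}\simeq A$ via $a\mapsto\rho_{\tilde P,a}$ (as in the proof of the Proposition above), and left multiplication by $c\in\mathcal{C}$ on $P=\tilde P_1$ is a right $B$-module endomorphism that extends uniquely to an element $\rho_{\tilde P,a}$ of $\mathrm{End}_A(\tilde P)^{\mathrm{op}}$; tracking degrees shows $a\in\mathcal{C}$ (the left action of $\mathcal{C}_1=\mathcal{Z}$ commutes with $B$, and one checks the homogeneous components land in $\mathcal{C}_h$), so this defines $\tilde\sigma_{\tilde P}:\mathcal{C}\to\mathcal{C}$ by $\tilde\sigma_{\tilde P}(c)\,p=p\,c$ for all $p\in P$. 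Injectivity and surjectivity of $\tilde\sigma_{\tilde P}$ (hence that it is an automorphism) follow because the analogous construction for $\tilde P^\ast$ gives a two-sided inverse, using $\tilde P\otimes_A\tilde P^\ast\simeq A\simeq\tilde P^\ast\otimes_A\tilde P$ as $G$-graded bimodules.

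Next I would check $\tilde\sigma_{\tilde P}\in\mathrm{Aut}_{kG}^{\mathrm{gr}}(\mathcal{C})$. That it preserves the $H$-grading is the degree computation just indicated: for $c\in\mathcal{C}_h$ and $p\in\tilde P_g$ one has $pc\in\tilde P_{gh}$, forcing $\tilde\sigma_{\tilde P}(c)\in\mathcal{C}_h$. Multiplicativity $\tilde\sigma_{\tilde P}(cc')=\tilde\sigma_{\tilde P}(c)\tilde\sigma_{\tilde P}(c')$ is immediate from associativity $p(cc')=(pc)c'$ and the defining relation, and $k$-linearity is clear. The one genuinely non-formal point is the $G$-equivariance: one must show $\tilde\sigma_{\tilde P}({}^g c)={}^g(\tilde\sigma_{\tilde P}(c))$ for all $g\in G$. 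For this I would use that the $G$-action on $\mathcal{C}$ is conjugation by $u_g\in\mathrm{hU}(A)\cap A_g$ (independent of the choice, by \ref{ss:isoFR}), pick a homogeneous $\tilde p_g\in\tilde P_g$, and compute both sides against $\tilde p_g$ or against a fixed $p\in P$ after moving $\tilde p_g$ across; the key algebraic identity is that, in $A$, $u_g$ conjugates $c$ to ${}^g c$, and $\tilde\sigma$ intertwines the $A$-actions via the fixed identification $\mathrm{End}_A(\tilde P)^{\mathrm{op}}\simeq A$, so conjugation on $\mathcal{C}$ corresponds to conjugation on $A$-endomorphisms, which is implemented on $\tilde P$ by the graded structure.

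Then I would verify that $\tilde\sigma$ is a group homomorphism $\mathrm{Pic}^{\mathrm{gr}}_k(A)\to\mathrm{Aut}_{kG}^{\mathrm{gr}}(\mathcal{C})$. Given $[\tilde P],[\tilde Q]$, the $1$-component of $\tilde P\otimes_A\tilde Q$ is (a quotient of, but in fact) computed from the homogeneous components, and for $p\in\tilde P_1$, $q\in\tilde Q_1$, $c\in\mathcal{C}$ one has $(p\otimes q)c=p\otimes(qc)=p\otimes\tilde\sigma_{\tilde Q}(c)q=(p\,\tilde\sigma_{\tilde Q}(c))\otimes q=(\tilde\sigma_{\tilde P}(\tilde\sigma_{\tilde Q}(c))\,p)\otimes q$, so $\tilde\sigma_{\tilde P\otimes_A\tilde Q}=\tilde\sigma_{\tilde P}\circ\tilde\sigma_{\tilde Q}$; I should double-check the convention so that this matches the product in $\mathrm{Aut}_{kG}^{\mathrm{gr}}(\mathcal{C})$ (composing in the right order), and that $\tilde\sigma_A=\mathrm{id}_{\mathcal{C}}$. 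Well-definedness on isomorphism classes is routine: an isomorphism $\tilde P\simeq\tilde Q$ of $G$-graded $(A,A)$-bimodules restricts to a $(B,B)$-isomorphism $\tilde P_1\simeq\tilde Q_1$ compatible with the $\mathcal{C}$-actions, forcing $\tilde\sigma_{\tilde P}=\tilde\sigma_{\tilde Q}$.

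Finally, exactness. The kernel of $\tilde\sigma$ consists of $[\tilde P]$ with $\tilde\sigma_{\tilde P}=\mathrm{id}_{\mathcal{C}}$, i.e.\ $pc=cp$ for all $p\in\tilde P_1=P$, $c\in\mathcal{C}$, which by condition $(2')$ in \ref{ss:Pic_gr_and_Picent_gr_and_bimodules_over_C} is exactly the defining condition for $[\tilde P]\in\mathrm{Picent}^{\mathrm{gr}}(A)$ (condition $(1)$ being automatic from invertibility in $\mathrm{Pic}^{\mathrm{gr}}_k(A)$). Hence $\ker\tilde\sigma=\mathrm{Picent}^{\mathrm{gr}}(A)$ and the inclusion $\mathrm{Picent}^{\mathrm{gr}}(A)\hookrightarrow\mathrm{Pic}^{\mathrm{gr}}_k(A)$ is injective as it is an inclusion of subsets, giving the asserted exact sequence. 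The main obstacle I expect is the $G$-equivariance of $\tilde\sigma_{\tilde P}$: it requires carefully relating the $G$-action on $\mathcal{C}$ (conjugation by homogeneous units of $A$) to the graded structure of $\tilde P$ through the algebra map $\mathcal{C}\to\mathrm{End}_A(\tilde P)^{\mathrm{op}}$ of \cite[Lemma 3.1]{art:MM1}, and checking that the whole picture is independent of the choices of the $u_g$.
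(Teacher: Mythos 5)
Your proposal is correct and follows essentially the same route as the paper: define $\tilde\sigma_{\tilde P}$ by transporting left multiplication by $c\in\mathcal{C}_h$ through the identification of $A$-linear endomorphisms of $\tilde P\simeq A\otimes_B\tilde P_1$ with right multiplications, check grading, multiplicativity and $G$-equivariance by degree bookkeeping and conjugation by the $u_g$, and identify the kernel via condition (2') of \ref{ss:Pic_gr_and_Picent_gr_and_bimodules_over_C}. The one step you leave as ``one checks'' --- that the element $a\in A_h$ with $cp=pa$ actually centralizes $B$, hence lies in $\mathcal{C}_h$ rather than merely in $A_h$ --- is handled in the paper by the short computation $p(ab)=(cp)b=c(pb)=(pb)a=p(ba)$, which gives $\tilde\rho_{ab}=\tilde\rho_{ba}$ and hence $ab=ba$ for all $b\in B$.
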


\begin{proof}
Let $[\tilde{P}]\in\mathrm{Pic}^{\mathrm{gr}}_{k}(A)$.
We define a map $\tilde{\sigma}_{\tilde{P}}:\mathcal{C}\to\mathcal{C}$ as follows.
We know that $\mathcal{C}=\mathcal{C}_H$, so let $h\in H$ and $c\in\mathcal{C}_h$. Recall that the $h$-suspension of a $G$-graded $A$-module $\tilde{M}$ is defined to be the $G$-graded $A$-module $\tilde{M}(h)=\bigoplus_{g\in G}\tilde{M}(h)_g$, where $\tilde{M}(h)_g=\tilde{M}_{gh}$.
The left multiplication $\lambda_c: P\to \tilde{P}_h$ is $B$-linear, hence
\[A\otimes_B \lambda_c: \tilde{P}\simeq A\otimes_B P\to \tilde{P}(h)\simeq A\otimes_B \tilde{P}_h \]
is a grade-preserving $A$-linear map. Since $\tilde{P}$ is an invertible $(A,A)$-bimodule, there is a unique $a\in A$ such that $A\otimes_B\lambda_c=\tilde{\rho}_{a}$, where $\tilde{\rho}_{a}:\tilde{P}\to\tilde{P}$ denotes the right multiplication by $a$. Since $A\otimes_B \lambda_c:\tilde{P}\to\tilde{P}(h)$, is grade-preserving, it follows that $a\in A_h$.
Note that the equality $A\otimes_B\lambda_c=\tilde{\rho}_{a}$ is equivalent to  $cp=pa$, for all $p\in P$. Now, for all $b\in B$,
\[
\begin{array}{rclclclclclclclclcl}
p(ab) & = & (pa)b &=& (cp)b &=& c(pb)&=& (pb)a &=& p(ba).
\end{array}
\]
This implies that $\tilde{\rho}_{ab}=\tilde{\rho}_{ba}$, hence $ab=ba$, for all $b\in B$, that is $a\in \mathcal{C}_h$.

By a similar argument, the element $c\in\mathcal{C}$ determines an unique element $a\in A$ such that $\rho_{c}\otimes_B A=\tilde{\lambda}_{a}$, where $\tilde{\lambda}_{a}:\tilde{P}\to\tilde{P}$ denotes the left multiplication by $a$. Thus, we obtain a grade-preserving bijection $\tilde{\sigma}_{\tilde{P}}:\mathcal{C}\to\mathcal{C}$, and we choose the notation such that $\tilde{\sigma}_{\tilde{P}}(c)p=pc$ for all $p\in P$ and $c\in\mathcal{C}$.

Let $\tilde{f}:\tilde{P}\to \tilde{P}'$ be an isomorphism of $G$-graded $(A,A)$-bimodules and $c\in \mathcal{C}$. Let $a\in A$ be the unique element such that $ap=pc$, for all $p\in P$ and let $a'\in A$ be the unique element such that $a'p'=p'c$, for all $p'\in P':=\tilde{P}'_1$. We get $\tilde{f}(ap)=\tilde{f}(pc)$, hence $a\tilde{f}(p)=\tilde{f}(p)c$, for all $p\in P$. Since $\tilde{f}$ is grade preserving and surjective, we obtain $ap'=p'c$, for all $p'\in P'$. But $a'\in A'$ was unique with this property, hence $a=a'$. This implies that $\tilde{\sigma}_{\tilde{P}}$ depends only on the class $[\tilde{P}]$. We also have that $\tilde{\sigma}_{A}$ is the identity map of $\mathcal{C}$.

By construction, it is quite clear that $\tilde{\sigma}_{\tilde{P}}:\mathcal{C}\to\mathcal{C}$ is a $k$-algebra map. Moreover, with $p,c$ and $a$ as above, for all $g\in G$, we have
\[
\begin{array}{rcl}
u_g cu_g^{-1} p &=& u_g cu_g^{-1} p u_g u_g^{-1}\\
&=& u_g u_g^{-1} p u_g a u_g^{-1}\\
&=& p u_g a u_g^{-1}.
\end{array}
\]
This shows that $\tilde{\sigma}_{\tilde{P}}({}^{g}c)={}^g\tilde{\sigma}_{\tilde{P}}(c)$, so $\tilde{\sigma}_{\tilde{P}}$ is a $G$-algebra map.

To show that $\tilde{\sigma}:\mathrm{Pic}^{\mathrm{gr}}_{k}(A) \to \mathrm{Aut}^{\mathrm{gr}}_{kG}(\mathcal{C})$ is a group homomorphism, let $[\tilde{P}]$, $[\tilde{Q}]\in \mathrm{Pic}^{\mathrm{gr}}_{k}(A)$. We know that the map
\[
    P\otimes_B Q\to (\tilde{P}\otimes_A\tilde{Q})_1,\qquad p\otimes_B q\mapsto p\otimes_A q
\]
is an isomorphism of $\Delta(A\otimes_{k}A^{\mathrm{op}})$-modules (see \cite[Lemma 1.6.6 a)]{book:Marcus1999}).
Then, for any $p\in P:=\tilde{P}_1$, $q\in Q:=\tilde{Q}_1$ and $c\in\mathcal{C}$, we have:
\[
\begin{array}{rcl}
({p}\otimes_A{q})c & = & {p}\otimes_A{q}c\\
& = & {p}\otimes_A\tilde{\sigma}_{\tilde{Q}}(c){q}\\
& = & {p}\tilde{\sigma}_{\tilde{Q}}(c)\otimes_A{q}\\
& = & \tilde{\sigma}_{\tilde{P}}(\tilde{\sigma}_{\tilde{Q}}(c)){p}\otimes_A{q}\\
& = & (\tilde{\sigma}_{\tilde{P}}\circ \tilde{\sigma}_{\tilde{Q}})(c)({p}\otimes_A{q}),
\end{array}
\]
hence $\tilde{\sigma}_{\tilde{P}\otimes_A\tilde{Q}}=\tilde{\sigma}_{\tilde{P}}\circ\tilde{\sigma}_{\tilde{Q}}$.

Finally, we see that $\tilde{P}\in\mathrm{Ker}\,\tilde{\sigma}$ if and only if $\tilde{\sigma}_{\tilde{P}}=\mathrm{id}_{\mathcal{C}}$, that is, $cp=pc$, for all $c\in\mathcal{C}$ and $p\in P$,
which means that $\tilde{P}\in\mathrm{Picent}^{\mathrm{gr}}(A)$.
\end{proof}

\begin{remark} \label{ss:G_action}
In particular, taking $G=1$ in Proposition \ref{prop:gr_pic_action}, we obtain that $\mathrm{Pic}_{k}(B)$ acts on $\mathcal{Z}$, by $\sigma_{P}(z)p=pz$, for all $p\in P$ and $z\in \mathcal{Z}$. Because the map $G\to\mathrm{Pic}_{k}(B)$, $g\mapsto [A_g]$ is a group homomorphism, we obtain that $G$ acts on $\mathcal{Z}$, such that ${}^{g}z$ is defined by $a_g z={}^{g}z a_g$, for all $z\in\mathcal{Z}$, $g\in G$ and $a_g\in A_g$.

Even more, we may deduce from Proposition \ref{prop:gr_pic_action}, the action of $G$ on $\mathcal{C}$ given in \ref{ss:isoFR} as follows.
Since $A_g\simeq A_h$ as left $B$-modules for all $g,h\in G$, we get that $A_g$ is a $G$-invariant progenerator in $B\textrm{-}\mathrm{mod}$, hence $A\otimes_B A_g\simeq A(g)$ is a $G$-graded progenerator in $A\textrm{-}\mathrm{mod}$. Therefore, the map
\[G\to \mathrm{Pic}^{\mathrm{gr}}_{k}(A),\qquad g\mapsto [A(g)]\]
is a group homomorphism. It follows by Proposition \ref{prop:gr_pic_action}, that $G$ acts on $\mathcal{C}$ such that ${}^{g}c$ is defined by $a_g c={}^{g}c a_g$, for all $c\in\mathcal{C}$, $g\in G$ and $a_g\in A_g$.
\end{remark}

\begin{remark} If $[P]\in \mathrm{Picent}(B)^{G/H}$, that is $A_g\otimes_B P\otimes_B A_{g^{-1}}\simeq B$ as $(B,B)$-bimodules, then we may define yet another action of $G$ on $\mathcal{Z}$, but which actually does not depend on $P$, and coincides with the actions mentioned in \ref{ss:G_action}.

This action is defined as follows. Regard $P$ as a $\Delta_1$-module and consider the $G$-graded endomorphism algebra $E:=\mathrm{End}_{\Delta}(\Delta\otimes_{\Delta_1} P)^{\mathrm{op}}.$ The $G$-invariance of $P$ implies that $E$ is a crossed product of $E_1\simeq \mathcal{Z}$ and $G$, where $z\in \mathcal{Z}$ is identified with the right multiplication $\rho_z:P\to P$.
For any $g\in G$, choose an isomorphism \[f_g:P\to A_g\otimes_B P\otimes_B A_{g^{-1}}\] of $(B,B)$-bimodules, so $f_g$ can be regarded as an element of $\mathrm{U}(E)\cap E_g$. This gives the action of $G$ on $E_1$ by $(g,\rho_z)\mapsto f_g\circ \rho_z\circ f_{g^{-1}}$. It is not difficult to show that $f_g\circ \rho_z\circ f_{g^{-1}}=\rho_{\tensor*[^{g}]{z}{}}:P\to P$, hence the actions coincide.
\end{remark}

\section{The exact sequence}

We can now state the main results of this paper. Our first theorem is the Picent version of the Beattie-del R\'\i o exact sequence \cite{art:delRio1996}. We adapt here the proof from \cite{art:Marcus1998}.

\begin{theorem} \label{th:main_exact_sequnce}
There is an exact sequence of groups:
\[\xymatrix@C+=1cm{
1 \ar@{->}[r] & \mathrm{H}^1(G/H, Z) \ar@{->}[r]^{\Phi} & \mathrm{Picent}^{\mathrm{gr}}(A)\ar@{->}[r]^{\Psi} & \mathrm{Picent}(B)^{G/H}\ar@{->}[r]^{\Theta} & \mathrm{H}^2(G/H, Z).
}\]
\end{theorem}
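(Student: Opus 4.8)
The plan is to construct the three maps $\Phi$, $\Psi$, $\Theta$ explicitly, using the machinery already assembled, and then to verify exactness at each of the three non-trivial spots. I would build $\Psi$ first: given $[\tilde P]\in\mathrm{Picent}^{\mathrm{gr}}(A)$, pass to the $1$-component $P:=\tilde P_1$, which is an invertible $(B,B)$-bimodule over $\mathcal Z$ by condition (2'), so $[P]\in\mathrm{Picent}(B)$; the $G$-grading on $\tilde P$ forces $A_g\otimes_B P\otimes_B A_{g^{-1}}\simeq \tilde P_g\otimes_B\tilde P_{g^{-1}}\simeq\tilde P_1=P$ as $(B,B)$-bimodules (using that $\tilde P$ is graded-invertible, so $\tilde P_g\otimes_B\tilde P_{g^{-1}}\to\tilde P_1$ is an iso), hence the image lies in $\mathrm{Picent}(B)^{G/H}$ — here one checks $G$-invariance only records a $G/H$-condition since $A_h\otimes_B P\otimes_B A_{h^{-1}}\simeq P$ automatically for $h\in H$. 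That $\Psi$ is a group homomorphism follows from $(\tilde P\otimes_A\tilde Q)_1\simeq P\otimes_B Q$, which is \cite[Lemma 1.6.6 a)]{book:Marcus1999}, already invoked in the proof of Proposition \ref{prop:gr_pic_action}.

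Next I would define $\Phi$. A class $[\beta]\in\mathrm{H}^1(G/H,Z)=\mathrm{Hom}$-with-cocycle data — concretely a $1$-cocycle $g\mapsto z_g\in Z$ on $G$ that is inflated from $G/H$ (so $z_h=1$ for $h\in H$ and $z_g$ depends only on $gH$) — should be sent to the class of the bimodule $\tilde P^{(\beta)}$ which equals $A$ as a left $A$-module and as $H$-graded $\mathcal C$-bimodule, but with right $A$-action on the $g$-component twisted by $z_g$: concretely $a\cdot a_g=a a_g z_g$ for $a_g\in A_g$. One checks this is a well-defined $G$-graded $(A,A)$-bimodule over $\mathcal C$ (the cocycle identity gives associativity; $z_h=1$ for $h\in H$ together with the fact that $Z$ centralizes everything gives condition (2)), that cohomologous cocycles give isomorphic bimodules (a $0$-cochain $z\in Z$ with coboundary ${}^g z\,z^{-1}$ produces the required iso via left multiplication by $z$), and that $[\tilde P^{(\beta)}]\in\mathrm{Ker}\,\Psi$ since the $1$-component is $A_1=B$. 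Injectivity of $\Phi$: if $\tilde P^{(\beta)}\simeq A$ as graded bimodules over $\mathcal C$, the isomorphism is determined by the image $u=f(1)\in\mathrm U(B)$ and comparing the right action on $A_g$ recovers $z_g$ as a coboundary; this is the same computation as in part 1) of the Proposition above. Exactness at $\mathrm{Picent}^{\mathrm{gr}}(A)$: if $[\tilde P]\in\mathrm{Ker}\,\Psi$, then $P=\tilde P_1\simeq B$ as $(B,B)$-bimodules over $\mathcal Z$; transporting this iso up via $A\otimes_B-$ and comparing $\tilde P\simeq A\otimes_B P$ with $A=A\otimes_B B$ as graded left $A$-modules, the two right $A$-structures differ, component by component, by a unit of $\mathcal Z_{?}$ — one extracts for each $g$ a scalar $z_g\in Z$, checks it is a cocycle inflated from $G/H$ (the $H$-part is trivial because on $A_H$ the structure is the canonical one, as $\hat P$ is genuinely $A_H\otimes_B P$), and concludes $[\tilde P]=\Phi[\beta]$.

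Finally $\Theta$, the obstruction map, is the crux. Given $[P]\in\mathrm{Picent}(B)^{G/H}$, I would form, as in the last Remark before Section 4, the $G$-graded algebra $E=\mathrm{End}_\Delta(\Delta\otimes_{\Delta_1}P)^{\mathrm{op}}$; the $G/H$-invariance of $P$ makes $E$ a crossed product of $E_1\simeq\mathcal Z$ with $G$, so one may pick homogeneous units $f_g\in\mathrm U(E)\cap E_g$ realizing isomorphisms $P\xrightarrow{\sim}A_g\otimes_B P\otimes_B A_{g^{-1}}$, and the defect $f_g f_{g'}f_{gg'}^{-1}=:\zeta(g,g')\in\mathrm U(E_1)=Z$ is a $2$-cocycle on $G$ with values in $Z$. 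The point is to show: (a) $\zeta$ can be chosen inflated from $G/H$ — i.e. for $h\in H$ one may take $f_h$ coming from the canonical iso $A_h\otimes_B P\otimes_B A_{h^{-1}}\simeq P$ (which exists since $A_h\simeq B$ as $(B,B)$-bimodules), and then $\zeta(h,h')=1$, $\zeta(gh,g'h')=\zeta(g,g')$; (b) the class $[\zeta]\in\mathrm{H}^2(G/H,Z)$ is independent of the choices of the $f_g$; (c) $\Theta$ is a group homomorphism (uses that tensoring $E$'s multiplies the cocycles) and (d) $[\zeta]=0$ iff the $f_g$ can be chosen multiplicatively, iff the graded bimodule $A\otimes_B P$ assembles into an honest $G$-graded $(A,A)$-bimodule over $\mathcal C$ — i.e. iff $[P]\in\mathrm{Im}\,\Psi$. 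Step (d) is exactness at $\mathrm{Picent}(B)^{G/H}$ and is where I expect the real work: one must match the vanishing of the cohomological obstruction with the existence of a compatible graded-bimodule structure, which amounts to checking that the naive right $A$-action $(a\otimes_B p)\cdot a_g := a f_g(p\otimes\cdot)$-type formula is associative exactly when $\zeta$ is a coboundary, and then that a coboundary adjustment by $z\in Z$ (an element of $\mathrm H^1$'s worth of ambiguity) does not change whether the structure exists, only which lift one gets — tying back to the image of $\Phi$. The normal subgroup $H$ intervenes precisely in ensuring all cocycles in sight are inflated from $G/H$, which is why $H$ appears in our sequence but not in the original Beattie–del Río one. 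I would organize the write-up as: construction of $\Psi$; construction and injectivity of $\Phi$; exactness at $\mathrm{Picent}^{\mathrm{gr}}(A)$; construction of $\Theta$ with well-definedness and homomorphism property; exactness at $\mathrm{Picent}(B)^{G/H}$.
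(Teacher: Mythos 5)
Your overall architecture coincides with the paper's: $\Psi$ is the passage to the $1$-component, $\Phi$ produces the classes whose $1$-component is trivial, $\Theta$ is the obstruction in $\mathrm{H}^2(G/H,Z)$ to extending the $(B,B)$-bimodule structure to a graded $(A,A)$-bimodule structure, and both exactness statements reduce to Dade's extension theorem \cite[Theorem 2.8]{art:Dade1981}. The genuine differences are in how the two outer maps are packaged. For $\Phi$, you twist the right $A$-action on $A$ by an explicit inflated cocycle; the paper instead invokes Dade's bijection between $\Delta$-module structures on $B$ and $Z$-conjugacy classes of splittings of $\mathrm{hU}(E)$, where $E=\mathrm{End}_{\Delta}(\Delta\otimes_{\Delta_1}B)^{\mathrm{op}}$, and identifies those splittings that restrict to the canonical one on $H$ with $\mathrm{H}^1(G/H,Z)$. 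The two constructions give the same map, and your explicit twist makes injectivity and exactness at $\mathrm{Picent}^{\mathrm{gr}}(A)$ more transparent (your observation that two graded right $A$-structures with the same left structure and the same restriction to $\mathcal{C}$ differ componentwise by units of $\mathcal{Z}$ is exactly Dade's parametrization, unwound). For $\Theta$, the paper first uses Proposition \ref{prop:eq} to extend $P$ to a $G/H$-invariant $\Delta_H$-module, regards $\Delta$ as $G/H$-graded with $1$-component $\Delta_H$, and reads off the class of the extension $1\to Z\to\mathrm{hU}(\bar{E})\to G/H\to 1$ for $\bar{E}=\mathrm{End}_{\Delta}(\Delta\otimes_{\Delta_H}P)^{\mathrm{op}}$; this lands in $\mathrm{H}^2(G/H,Z)$ by construction.

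The one step you should not leave as an assertion is item (a) in your construction of $\Theta$: that the $2$-cocycle $\zeta$ on $G$ \emph{can be chosen inflated} from $G/H$. Arranging $\zeta(h,h')=1$ for $h,h'\in H$ is indeed possible (this is the computation in Proposition \ref{prop:eq}: the maps $c_h\otimes p\otimes c_h^{-1}\mapsto p$ assemble multiplicatively precisely because $P$ is a bimodule over $\mathcal{Z}$), but $\zeta(gh,g'h')=\zeta(g,g')$ does not follow from this alone: it additionally requires $f_{gh}=f_gf_h$ and, crucially, $f_g^{-1}f_hf_g=f_{g^{-1}hg}$ inside $\mathrm{hU}(E)$, i.e. that the chosen splitting over $H$ is normalized (not merely stabilized up to $Z$) by the $f_g$. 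Restriction-triviality of a class in $\mathrm{H}^2(G,Z)$ is weaker than being an inflation, so this compatibility must be verified; it is true here, but it is exactly the content hidden in your phrase ``canonical iso.'' The cleanest repair is the paper's device: work from the outset with the $G/H$-graded algebra $\bar{E}$ over $\Delta_H$, so that the cocycle is a $G/H$-cocycle by construction and no inflation argument is needed. With that adjustment (or with the normalization identity proved directly), your plan for steps (b)--(d) and for exactness at $\mathrm{Picent}(B)^{G/H}$ matches the paper's Step 5 and goes through.
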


\begin{proof} \textit{Step 1. The group homomorphism $\Psi:\mathrm{Picent}^{\mathrm{gr}}(A)\to \mathrm{Picent}(B)^{G/H}$.} Let $[\tilde{P}]\in\mathrm{Picent}^{\mathrm{gr}}(A)$, then $\tilde{P}$ is an $(A,A)$-bimodules over $\mathcal{C}$. By \cite[Theorem 3.3]{art:MM2}, the 1-component $P$ of $\tilde{P}$ is a $G$-invariant $(B,B)$-bimodule over $\mathcal{Z}=\mathcal{C}_1$, hence $[P]\in \mathrm{Picent}(B)^{G/H}$, so we define $\Psi([\tilde{P}])=[P]$. Clearly this is a well-defined map. If $[\tilde{P}],[\tilde{Q}]\in\mathrm{Picent}^{\mathrm{gr}}(A)$, then by \cite[Proposition 2.11]{art:MM2}, we have $(\tilde{P}\otimes_A \tilde{Q})_1\simeq P\otimes_B Q$, hence $\Psi$ is a group homomorphism.

\textit{Step 2. The group homomorphism  $\Phi: \mathrm{H}^1(G/H, Z) \to \mathrm{Picent}^{\mathrm{gr}}(A)$.} Consider the canonical $\Delta$-module $P=B$, where $\Delta=\Delta(A\otimes_k A^{\mathrm{op}})$ and let $E:=\mathrm{End}_{\Delta}(\Delta\otimes_{\Delta_1} P)^{\mathrm{op}}$. By a theorem of Dade (see \cite[Section 2]{art:Dade1981} or \cite[Theorem 3.1.1]{book:Marcus1999}), there is a bijection between the isomorphism classes of the $\Delta$-module structures on $B$ and the $Z$-conjugacy classes of splittings of $\mathrm{hU}(E)$. The  splitting $\mu:G\to \mathrm{hU}(E)$ corresponding to $P$ is induced by the multiplication maps $\mu_g:A_{g^{-1}}\otimes_B P\otimes_B A_g \to P$, $g\in G$. Moreover, $P$ is the 1-component of the $G$-graded $(A,A)$-bimodule $\tilde{P}:=A$, so $[\tilde{P}]$ is the identity element of $\mathrm{Picent}^{\mathrm{gr}}(A)$.

It is well-known that there is a bijection between $\mathrm{H}^1(G,Z)$ and the $Z$-conjugacy classes of splittings of $\mathrm{hU}(E)$, given by $[\gamma]\mapsto \bar{\gamma}:G\to \mathrm{hU}(E)$, where $\bar{\gamma}(g)=\gamma(g)\mu(g)$, for all $g\in G$ and all $\gamma\in\mathrm{Z}^{1}(G,Z)$.

We also obtain an injection between $\mathrm{H}^{1}(G,Z)$ and $\mathrm{Picent}^{\mathrm{gr}}(A)$ as follows. Denote by $P^{\gamma}$ the $\Delta$-module extending $P$, corresponding to $[\gamma]$. Then $\tilde{P}^{\gamma}=(A\otimes_k A^{\mathrm{op}})\otimes_{\Delta} P^{\gamma}$ represents a class in $\mathrm{Pic}^{\mathrm{gr}}_k(A)$. It is clear that $\tilde{P}^{\gamma}$ is a $G$-graded $(A,A)$-bimodule over $\mathcal{C}$ if and only if $\tilde{P}^{\gamma}_H=A_H$. This happens if and only if $\tilde{\gamma}(h)=\mu (h)$, for all $h\in H$, or equivalently, the restriction of $\gamma$ to $H$ is trivial. By \ref{ss:first_column}, we may regard $[\gamma]\in\mathrm{H}^1(G/H,Z)$, and in this case $[\tilde{P}^{\gamma}]\in\mathrm{Picent}^{\mathrm{gr}}(A)$.

Since ${P}^{\gamma}\otimes_B {P}^{\delta}\simeq {P}^{\gamma\delta}$ as $\Delta$-modules, for all $\gamma,\delta \in \mathrm{Z}^1(G/H,Z)$, we conclude that $\Phi$: $[\gamma]\mapsto[\tilde{P}^{\gamma}]$ is a well-defined injective group homomorphism.

\textit{Step 3. Exactness at $\mathrm{Picent}^{\mathrm{gr}}(A)$.} It is clear by the definitions of $\Phi$ and $\Psi$ that a class $[\tilde{P}]$ belongs to the image of $\Phi$ if and only if the 1-component $P$ of $\tilde{P}$ is isomorphic to $B$ as $(B,B)$-bimodules, which holds if and only if $[\tilde{P}]$ belongs to $\mathrm{Ker}\,\Psi$.

\textit{Step 4. The group homomorphism $\Theta:\mathrm{Picent}(B)^{G/H}\to \mathrm{H}^2(G/H, Z)$.} Let $[P]\in\mathrm{Picent}(B)^{G/H}$, so $P$ is a $G$-invariant $(B,B)$-bimodule over $\mathcal{Z}$. Let $\Delta=\Delta(A\otimes_k A^{\mathrm{op}})$. By Proposition \ref{prop:eq}, $P$ extends to a $\Delta_H$-module, and this $\Delta_H$-module is also $G/H$-invariant.  We regard $\Delta$ as a $G/H$-graded algebra with 1-component $\Delta_H$.

Consider the $G/H$-graded endomorphism algebra $\bar{E}=\mathrm{End}_{\Delta}(\Delta\otimes_{\Delta_H}P)^{\mathrm{op}}$ with 1-component $\bar{E}_1\simeq \mathcal{Z}$. By the $G/H$-invariance of $P$, we have that $\bar{E}$ is actually a crossed product of $\mathcal{Z}$ and $G/H$. Let $\Theta([P])$ be the element of $\mathrm{H}^2(G/H, Z)$ associated to the extension:
\[
\xymatrix@C+=1cm{
1 \ar@{->}[r] & Z \ar@{->}[r] & \mathrm{hU}(E) \ar@{->}[r] & G/H \ar@{->}[r] & 1.
}
\]
It is clear that $\Theta$ is a well-defined map. To show that $\Theta$ is a group homomorphism, let $[P],[Q]\in\mathrm{Picent}(B)^{G/H}$. We regard $P$ and $Q$ as $G/H$-invariant $\Delta_H$-modules, and let
\[\alpha_{gH}:P\to\Delta_{gH}\otimes_{\Delta_H}P,\qquad \beta_{gH}:Q\to \Delta_{gH}\otimes_{\Delta_H}Q\]
be isomorphisms of $\Delta_H$-modules, for all $gH\in G/H$. Observe that $P\otimes_B Q$ is also a $\Delta_H$-module, and
\[\Delta_{gH}\otimes_{\Delta_H}(P\otimes_B Q)\simeq (\Delta_{gH}\otimes_{\Delta_H}P)\otimes_B(\Delta_{gH}\otimes_{\Delta_H}Q),\]
as $\Delta_H$-modules. Then we have the $\Delta_H$-isomorphisms
\[\gamma_{gH}:=\alpha_{gH}\otimes_B\beta_{gH}:P\otimes_B Q\to \Delta_{gH}\otimes_{\Delta_H}(P\otimes_B Q).\]
We have $\Theta([P])=[\alpha]$ and $\Theta([Q])=[\beta]$, where
\[\alpha({gH},g'H)=\alpha_{gH}\alpha_{g'H}\alpha_{gg'H}^{-1}\qquad\text{and}\qquad \beta({gH},g'H)=\beta_{gH}\beta_{g'H}\beta_{gg'H}^{-1}.\]
We obtain
\[
\begin{array}{rcl}
    (p\otimes q)\gamma(gH,g'H) & = & (p\otimes q)\gamma_{gH}\gamma_{g'H}\gamma_{gg'H}^{-1}\\
    & = & (p\otimes q)(\alpha_{gH}\otimes\beta_{gH})(\alpha_{g'H}\otimes\beta_{g'H})(\alpha_{gg'H}^{-1}\otimes\beta_{gg'H}^{-1})\\
    & = & (p)\alpha_{gH}\alpha_{g'H}\alpha_{gg'H}^{-1}\otimes (q)\beta_{gH}\beta_{g'H}\beta_{gg'H}^{-1}\\
    & = & p\cdot \alpha(gH,g'H) \otimes q\cdot \beta(gH,g'H)\\
    & = & p\otimes q\cdot \alpha(gH,g'H) \beta(gH,g'H),
\end{array}
\]
for all $gH,g'H\in G/H$. It follows that $\Theta([P]\cdot [Q])=\Theta([P])\cdot\Theta([Q])$.

  \textit{Step 5. Exactness at $\mathrm{Picent}(B)^{G/H}$.} Let $[P]\in \mathrm{Picent}(B)^{G/H}$. By Dade's theorem \cite[Theorem 2.8]{art:Dade1981}, $\Theta([P])$ is trivial if and only if $P$ extends to a $\Delta$-module, which is over $\mathcal{C}$ in our situation. By \cite[Theorem 3.3]{art:MM2}, this is equivalent to the existence of a class $[\tilde{P}]\in \mathrm{Picent}^{\mathrm{gr}}(A)$, such that $[P]=\Psi([\tilde{P}])$.
\end{proof}

\begin{theorem} \label{th:diagram} {\rm1)}     The squares in the following diagram, with exact rows and columns, are commutative:
    \[\xymatrix@C+=1cm{
& 1 \ar@{->}[d] & 1 \ar@{->}[d] & 1 \ar@{->}[d] & &\\
1 \ar@{->}[r] & \mathrm{H}^1(G/H, Z) \ar@{->}[r]^{\Phi} \ar@{->}[d]^{\mathrm{Inf}}& \mathrm{Picent}^{\mathrm{gr}}(A)\ar@{->}[r]^{\Psi} \ar@{->}[d]^{\subseteq} & \mathrm{Picent}(B)^{G/H}\ar@{->}[r]^{\Theta} \ar@{->}[d]^{\subseteq} & \mathrm{H}^2(G/H, Z) \ar@{->}[d]^{\mathrm{Inf}}&\\
1 \ar@{->}[r] & \mathrm{H}^1(G, Z) \ar@{->}[r]^{\Phi_k} \ar@{->}[d]^{\mathrm{Res}} & \mathrm{Pic}^{\mathrm{gr}}_k(A)\ar@{->}[r]^{\Psi_k} \ar@{->}[d]^{\tilde{\sigma}}& \mathrm{Pic}_k(B)^{G}\ar@{->}[r]^{\Theta_k} \ar@{->}[d]^{\sigma} & \mathrm{H}^2(G, Z)&\\
1 \ar@{->}[r] & \mathrm{H}^1(H, Z)^{G/H} \ar@{->}[r]^{\phi}
\ar`d/8pt[r]`[rrrr]_{\mathrm{Tg}}`[uuu]`[rrr][rrruu]
& \mathrm{Aut}^{\mathrm{gr}}_{kG}(\mathcal{C}) \ar@{->}[r]^{\psi} & \mathrm{Aut}_{k}(\mathcal{Z})^{G}&&
}\]

{\rm2)} There is a group homomorphism $\theta:\mathrm{H}^1(H,Z)^{G/H}\to \mathrm{Pic}^{\mathrm{gr}}_{\mathcal{Z}}(A_H)^{G/H}$ such that the following diagram is commutative.
\[
\xymatrix@C+=4cm{
& \mathrm{H}^1(H,Z)^{G/H} \ar@{->}[d]^{\mathrm{Tg}}\ar@{..>}[dl]_{\theta} \\
\mathrm{Pic}^{\mathrm{gr}}_{\mathcal{Z}}(A_H)^{G/H} \ar@{->}[r]^{\Theta} & \mathrm{H}^2(G/H,Z).
}\]
\end{theorem}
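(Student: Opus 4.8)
The plan is to construct $\theta$ directly from the 5-term exact sequence data and the action of $\mathrm{Pic}^{\mathrm{gr}}_{\mathcal{Z}}(A_H)$ on $\mathcal{Z}$, mirroring the relationship between $\mathrm{Aut}^{\mathrm{gr}}_{kG}(\mathcal{C})$ and $\mathrm{Pic}^{\mathrm{gr}}_k(A)$ that was set up in the columns of part 1). First I would recall that $\mathrm{Pic}^{\mathrm{gr}}_{\mathcal{Z}}(A_H)$ denotes the group of isomorphism classes of $H$-graded invertible $(A_H,A_H)$-bimodules over $\mathcal{Z}$ (as in \ref{subsec:P_tilda}), and that $G/H$ acts on it by conjugation $[\hat{P}]\mapsto[A_{gH}\otimes_{A_H}\hat{P}\otimes_{A_H}A_{g^{-1}H}]$ as in the proof of Proposition \ref{prop:eq}. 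Given a class $\beta\in\mathrm{H}^1(H,Z)^{G/H}=\mathrm{Hom}_{\mathrm{Grp}}(H,Z)^{G/H}$, I would produce the $H$-graded $(A_H,A_H)$-bimodule $\hat{P}^{\beta}$ over $\mathcal{Z}$ obtained by twisting the trivial bimodule $A_H$ by $\beta$: concretely, using the Dade-style correspondence from Step 2 of the proof of Theorem \ref{th:main_exact_sequnce} applied with $H$ in place of $G$, the cocycle $\beta$ modifies the splitting $\mu|_H$ of $\mathrm{hU}(E_H)$ and hence produces a new $\Delta_H$-module structure $B^{\beta}$ on $B$, whose induced bimodule $\hat{P}^{\beta}=A_H\otimes_B B^{\beta}$ gives the desired class. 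Setting $\theta(\beta):=[\hat{P}^{\beta}]$, the fact that $\beta$ is $G/H$-invariant guarantees $\theta(\beta)$ lands in $\mathrm{Pic}^{\mathrm{gr}}_{\mathcal{Z}}(A_H)^{G/H}$, and multiplicativity of the Dade correspondence (as used for $\Phi$) makes $\theta$ a group homomorphism.

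The next step is to verify commutativity of the triangle, i.e. $\Theta\circ\theta=\mathrm{Tg}$. Here I would unwind both maps in terms of $2$-cocycles of $G/H$ with values in $Z$. On the one hand, $\mathrm{Tg}(\beta)$ is the transgression: lifting $\beta:H\to Z$ to a map $\tilde{\beta}:G\to Z$ (not a homomorphism) and taking the coboundary, which is inflated from $G/H$; this is precisely the standard description of the connecting map in the Lyndon–Hochschild–Serre sequence, as in \cite[Proposition 1.6.7]{book:Neukirch2008}. On the other hand, $\Theta([\hat{P}^{\beta}])$ is read off from the crossed product $\bar{E}^{\beta}=\mathrm{End}_{\Delta}(\Delta\otimes_{\Delta_H}B^{\beta})^{\mathrm{op}}$ of $\mathcal{Z}$ and $G/H$: choosing isomorphisms $\alpha_{gH}:B^{\beta}\to\Delta_{gH}\otimes_{\Delta_H}B^{\beta}$, the obstruction cocycle is $\alpha(gH,g'H)=\alpha_{gH}\alpha_{g'H}\alpha_{gg'H}^{-1}\in Z$. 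The key computation is to show that choosing the $\alpha_{gH}$ compatibly with the lift $\tilde{\beta}$ of $\beta$ and the original splitting $\mu$ makes this cocycle coincide with the transgression cocycle of $\beta$. Conceptually this is forced: the $\Delta$-module extending $B^{\beta}$ exists as a $\Delta$-module exactly when $\beta$ extends to a homomorphism $G\to Z$, i.e. when $\mathrm{Res}$ hits $\beta$, and the precise failure is measured by $\mathrm{Tg}$ — so one should phrase the argument as: $\Theta([\hat P^\beta])$ and $\mathrm{Tg}(\beta)$ are both characterized as the unique obstruction to the same extension problem, hence agree.

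The main obstacle I expect is bookkeeping: making the identifications $\mathrm{hU}(E_H)$ inside $\mathrm{hU}(E)$, the splitting $\mu$ and its $H$-restriction, and the isomorphisms $\alpha_{gH}$ all consistent so that the two cocycles literally match rather than merely being cohomologous by an unspecified coboundary — though for a statement at the level of cohomology classes, being cohomologous suffices, which eases things considerably. A secondary point to handle carefully is checking that $\hat P^\beta$ really is a bimodule \emph{over $\mathcal{Z}$} (condition (2') of \ref{ss:Pic_gr_and_Picent_gr_and_bimodules_over_C} with $\mathcal{Z}$ in place of $\mathcal{C}$): this follows because the twist by $\beta$ takes values in $Z\subseteq\mathrm{U}(\mathcal{Z})$ and $H$ acts trivially on $\mathcal{Z}$ by \ref{ss:cvasihuge_diagram}, so the argument of Proposition \ref{prop:eq} showing $\hat P$ is over $\mathcal{C}$ adapts verbatim. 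Finally, $G/H$-invariance of $\theta(\beta)$ should be derived from $G/H$-invariance of $\beta$ together with the $G/H$-equivariance already recorded in Proposition \ref{prop:eq}, so that $\theta$ indeed maps into the $G/H$-fixed points as claimed.
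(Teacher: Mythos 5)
Your proposal only addresses part 2) of the theorem. You explicitly lean on ``the relationship \ldots\ that was set up in the columns of part 1)'' as given, but part 1) is itself part of the statement to be proved, and it is where most of the work lies: one must \emph{construct} the maps $\phi:\mathrm{H}^1(H,Z)^{G/H}\to\mathrm{Aut}^{\mathrm{gr}}_{kG}(\mathcal{C})$ (via $f_\beta(c_h)=\beta(h)c_h$, checking it is a well-defined grade-preserving $G$-algebra automorphism using the $G/H$-invariance of $\beta$) and $\psi=(-)_1$, prove exactness of the third row at $\mathrm{Aut}^{\mathrm{gr}}_{kG}(\mathcal{C})$ (the nontrivial direction requires recovering $\beta$ from $f$ by $\beta(h)=u_h^{-1}f(u_h)$ for $u_h\in\mathcal{C}_h\cap\mathrm{hU}(A)$ and verifying it is a $G$-invariant homomorphism), establish the third column by showing ${}^{g}\sigma_P=\sigma_{{}^gP}$ so that $\sigma$ lands in $\mathrm{Aut}_k(\mathcal{Z})^G$, and check commutativity of the squares --- in particular the bottom-left square $\tilde\sigma\circ\Phi_k=\phi\circ\mathrm{Res}$, which requires unwinding how the cocycle $\beta$ twists the $\mathcal{C}$-action on the $1$-component $B^\beta$. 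None of this appears in your write-up, so as a proof of the theorem as stated it has a substantial hole.

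For part 2) itself, your route is essentially the paper's: define $\theta([\beta])=[A_H\otimes_B B^\beta]$ via the Dade correspondence applied to the splitting of $\mathrm{hU}(E_H)$, check the result is over $\mathcal{Z}$ and $G/H$-invariant, and identify $\Theta([\hat P^\beta])$ with $\mathrm{Tg}([\beta])$ by comparing the obstruction cocycle of $\bar E^\beta=\mathrm{End}_\Delta(\Delta\otimes_{\Delta_H}B^\beta)^{\mathrm{op}}$ with the transgression cocycle $\beta\circ s$ built from coset representatives. Your observation that both classes measure the obstruction to extending $B^\beta$ to a $\Delta$-module is a reasonable way to organize that comparison, and your remarks on why the twisted bimodule is only over $\mathcal{Z}$ (not over $\mathcal{C}$) are correct and slightly more careful than the paper's terse ``it is easy to see.'' But you should either supply part 1) or state clearly that you are proving only part 2).
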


\goodbreak
\begin{proof} 1)  The first row of the diagram holds by Theorem \ref{th:main_exact_sequnce}, and the second row is a result obtained by Beattie and del R\'{i}o in \cite{art:delRio1996} (see \cite{art:Marcus1998} for a direct proof). We consider the third row.

\textit{Step 1: The map $\phi:\mathrm{H}^1(H, Z)^{G/H}\to \mathrm{Aut}^{\mathrm{gr}}_{kG}(\mathcal{C})$.} We know from \ref{ss:first_column} that $\mathrm{H}^1(H, Z)=\mathrm{Hom}_{\mathrm{Grp}}(H,Z)$. Consider $\beta\in \mathrm{Hom}_{\mathrm{Grp}}(H,Z)^{G/H}$. We define
\[f_{\beta}:\mathcal{C}\to\mathcal{C},\qquad f_{\beta}(c_h)=\beta(h) c_h,\]
for all $h\in H$, $c_h\in \mathcal{C}_h$. This map is well-defined. Indeed, for all $b\in B$ we have:
\[  \beta(h) c_h b  =  \beta(h) b c_h=  b \beta(h) c_h,\]
therefore we have that $\beta(h) c_h\in\mathrm{C}_A(B)$. It is clear that $\beta(h) c_h\in\mathrm{C}_A(B)_h$, hence, also because $h\in H$, we obtain $\beta(h) c_h\in\mathcal{C}_h$. Note that $f_{\beta}$ is $k$-linear, grade preserving, and the inverse of $f_{\beta}$ is the map $c_h\mapsto (\beta(h))^{-1} c_h$. Moreover, for all $h_1,h_2\in H$, $c_{h_1}\in \mathcal{C}_{h_1}$, $c_{h_2}\in \mathcal{C}_{h_2}$, we have:
\[
\begin{array}{rcl}
     f_{\beta}(c_{h_1} c_{h_2}) & = & \beta(h_1h_2) c_{h_1} c_{h_2}\\
     & = & \beta(h_1)\beta(h_2) c_{h_1} c_{h_2}\\
     & = & \beta(h_1) c_{h_1} \beta(h_2) c_{h_2},  \\ 
      & = & f_\beta(c_{h_1}) f_\beta(c_{h_2}),
\end{array}
\]
therefore $f_{\beta}\in \mathrm{Aut}^{\mathrm{gr}}_{k}(\mathcal{C})$.

Because $\beta\in \mathrm{Hom}_{\mathrm{Grp}}(H,Z)^{G/H}$, we have that $\beta$ is $G/H$-invariant, that is,  ${}^g\beta=\beta$, for all $g\in G$, which implies $({}^g\beta)(h)=\beta(h)$, for all $g\in G$, $h\in H$, that is, ${}^{g}(\beta({}^{g^{-1}}h))=\beta(h)$, for all $g\in G$, $h\in H$. We have:
\[\begin{array}{rcl}
     {}^{g}(f_{\beta}(c_h)) & = & u_g f_{\beta}(c_h) u_g^{-1}\\
     & = & u_g \beta(h) c_h u_g^{-1}\\
     & = & u_g \beta(g^{-1}ghg^{-1}g) u_g^{-1} u_g c_h u_g^{-1}\\
     & = & {}^{g}(\beta({}^{g^{-1}}(ghg^{-1}))) u_g^{-1} u_g c_h u_g^{-1}\\
     & = & \beta(ghg^{-1}) u_g c_h u_g^{-1}\\
     & = & f_\beta(u_g c_h u_g^{-1})\\
     & = & f_\beta({}^g c_h),
\end{array}\]
for all $g\in G$, $h\in H$, $c_h\in \mathcal{C}_h$. This concludes that $f_{\beta}\in \mathrm{Aut}^{\mathrm{gr}}_{kG}(\mathcal{C})$.

We define $\phi:\mathrm{H}^1(H, Z)^{G/H}\to \mathrm{Aut}^{\mathrm{gr}}_{kG}(\mathcal{C})$ by $\phi(\beta)=f_\beta$, as above. We prove that $\phi$ is a group homomorphism. Indeed, if $\beta_1,\beta_2\in \mathrm{Hom}_{\mathrm{Grp}}(H,Z)^{G/H}$, $h\in H$ and $c_h\in\mathcal{C}_h$, we have:
\[\begin{array}{rcl}
     \phi(\beta_1 \beta_2)(c_h) & = & f_{\beta_1 \beta_2}(c_h)\\
     & = & (\beta_1 \beta_2)(h) c_h\\
     & = & \beta_1(h) \beta_2(h) c_h\\
     & = & \beta_1(h)f_{\beta_2}(c_h)\\
     & = & f_{\beta_1}( f_{\beta_2}(c_h))\\
     & = & (f_{\beta_1}\circ f_{\beta_2})(c_h)\\
     & = & (\phi({\beta_1})\circ \phi({\beta_2}))(c_h).
\end{array}\]
It is also easy to see  that $\phi$ is injective.
\goodbreak
\textit{Step 2: The map $\psi:\mathrm{Aut}^{\mathrm{gr}}_{kG}(\mathcal{C})\to  \mathrm{Aut}_{k}(\mathcal{Z})^{G}$.} It is straightforward that for $f\in \mathrm{Aut}^{\mathrm{gr}}_{kG}(\mathcal{C})$, $f_1\in \mathrm{Aut}_{k}(\mathcal{Z})$. We prove that $f_1\in \mathrm{Aut}_{k}(\mathcal{Z})^G$. First, recall that because $G$ acts on $\mathcal{Z}$, we have that $G$ acts on $\mathrm{Aut}_{k}(\mathcal{Z})$ via $({}^g\alpha)(z)={}^{g}(\alpha({}^{g^{-1}}z))$, for all $z\in \mathcal{Z}$, $g\in G$, $\alpha\in \mathrm{Aut}_{k}(\mathcal{Z})$. Second, because $f\in \mathrm{Aut}^{\mathrm{gr}}_{kG}(\mathcal{C})$, we have that $f({}^{g}c)={}^{g}(f(c))$, for all $g\in G$, $c\in\mathcal{C}$. This implies that $f({}^{g}z)={}^{g}(f(z))$, for all $g\in G$, $z\in\mathcal{Z}$, where because $G$ acts on $\mathcal{Z}$ we obtain $f_1({}^{g}z)={}^{g}(f_1(z))$, for all $g\in G$, $z\in\mathcal{Z}$, hence $f_1({}^{g^{-1}}z)={}^{g^{-1}}(f_1(z))$, for all $g\in G$, $z\in\mathcal{Z}$. By conjugation with $g$, with obtain $({}^g f_1)(z)=f_1(z)$, for all $g\in G$, $z\in\mathcal{Z}$. This concludes that $f_1\in \mathrm{Aut}_{k}(\mathcal{Z})^G$.

Therefore, we define $\psi:\mathrm{Aut}^{\mathrm{gr}}_{kG}(\mathcal{C})\to  \mathrm{Aut}_{k}(\mathcal{Z})^{G}$, $\psi=(-)_1$. As this construction is functorial, it is clear that $\psi$ is a group homomorphism.

\textit{Step 3: Exactness at $\mathrm{Aut}^{\mathrm{gr}}_{kG}(\mathcal{C})$.} We want to prove that $\mathrm{Im}\,\phi=\mathrm{Ker}\,\psi$. For $\mathrm{Im}\,\phi\subseteq\mathrm{Ker}\,\psi$, consider $\beta\in \mathrm{Hom}_{\mathrm{Grp}}(H,Z)^{G/H}$. Because $Z=\mathrm{U}(\mathcal{Z})$, we clearly have $\beta(1_G)=1_A$, hence $f_{\beta}(z)=z$, for all $z\in \mathcal{Z}=\mathcal{C}_1$, which implies $f_{\beta}\in \mathrm{Ker}\,\psi$.

Conversely, consider $f\in \mathrm{Aut}^{\mathrm{gr}}_{kG}(\mathcal{C})$ such that $f(z)=z$, for all $z\in \mathcal{Z}$. We define $\beta:H\to Z$, $\beta(h)=u_{h}^{-1}f(u_h)$, where $u_h\in\mathcal{C}_h\cap \mathrm{hU}(A)$, for all $h\in H$. Because $f$ preserves grades and invertibles,  $\beta$ is well-defined. We prove that $\beta$ does not depend on the choice of invertible homogeneous elements $u_h\in\mathcal{C}_h\cap \mathrm{hU}(A)$, $h\in H$. Indeed, if $v_h\in\mathcal{C}_h\cap \mathrm{hU}(A)$, there exists $b\in Z=\mathrm{U}(\mathcal{Z})$ such that $v_h=bu_h$. We have
\[\begin{array}{rcccl}
     v_h^{-1}f(v_h) & = & u_h^{-1}b^{-1}f(bu_h)
      & = & u_h^{-1}b^{-1}f(b)f(u_h),
\end{array}\]
but $f(b)=b$, so the claim is proved. Moreover, $\beta$ is group homomorphism. Indeed, for $h_1,h_2\in H$ and $u_{h_1},u_{h_2}\in \mathcal{C}_h\cap \mathrm{hU}(A)$, we have:
\[    \begin{array}{rcccl}
         \beta(h_1h_2) & = & (u_{h_1}u_{h_2})^{-1}f(u_{h_1}u_{h_2})
          & = & u_{h_2}^{-1}u_{h_1}^{-1}f(u_{h_1})f(u_{h_2}),
    \end{array}\]
hence $u_{h_1}^{-1}f(u_{h_1})\in B$, and because $u_{h_2}^{-1}\in \mathcal{C}_h\cap \mathrm{hU}(A)$ we obtain:
\[    \begin{array}{rcccl}
         \beta(h_1h_2) & = & u_{h_1}^{-1}f(u_{h_1})u_{h_2}^{-1}f(u_{h_2})& = & \beta(h_1)\beta(h_2).
    \end{array}\]

Lastly, for $g\in G$, $h\in H$, $u_g\in A_{g}\cap \mathrm{hU}(A)$ and $u_{h}\in \mathcal{C}_h\cap \mathrm{hU}(A)$, we have:
\[    \begin{array}{rcl}
         ({}^{g}\beta)(h) & = & {}^{g}(\beta({}^{g^{-1}}h))\\
          & = & u_g\beta(g^{-1}hg)u_g^{-1}\\
          & = & u_g(u_g^{-1}u_hu_g)^{-1}f(u_g^{-1}u_hu_g)u_g^{-1},\text{ since }u_g^{-1}u_hu_g\in\mathcal{C}_{g^{-1}hg}\cap\mathrm{hU}(\mathcal{C})\\
          & = & u_gu_g^{-1}u_h^{-1}u_gf(u_g^{-1}u_hu_g)u_g^{-1}\\
          & = & u_h^{-1}u_gf(u_g^{-1}u_hu_g)u_g^{-1}\\
          & = & u_h^{-1}\cdot {}^{g}(f(u_g^{-1}u_hu_g))\\
          & = & u_h^{-1}f({}^{g}(u_g^{-1}u_hu_g)),\text{ since }f\in\mathrm{Aut}^{\mathrm{gr}}_{kG}(\mathcal{C})\\
          & = & u_h^{-1}f(u_{g}(u_g^{-1}u_hu_g)u_{g}^{-1})\\
          & = & u_h^{-1}f(u_h)\\
          & = & \beta(h),
    \end{array}\]
therefore $\beta$ is $G$-invariant. Together, these prove that $\beta \in \mathrm{Hom}_{\mathrm{Grp}}(H,Z)^{G/H}$. For $h\in H$ and $c_h\in\mathcal{C}_h$, we have
\[\begin{array}{rcl}
     \phi(\beta)(c_h) & = & f_{\beta}(c_h)\\
     & = & \beta(h) c_h\\
     & = & u_h^{-1} f(u_h) c_h\\
     & = & u_h^{-1} f(u_h) u_h u_h^{-1} c_h\\
     & = & {}^{h^{-1}}f(u_h) u_h^{-1} c_h\\
     & = & f({}^{h^{-1}}u_h) u_h^{-1} c_h,\text{ since }f\in\mathrm{Aut}^{\mathrm{gr}}_{kG}(\mathcal{C})\\
     & = & f(u_h^{-1}u_hu_h) u_h^{-1}t c_h\\
     & = & f(u_h)u_h^{-1} c_h\\
     & = & f(u_h) f(u_h^{-1} c_h),\text{ since }u_h^{-1} c_h\in\mathcal{Z}\\
     & = & f(u_h u_h^{-1} c_h)\\
     & = & f(c_h),\\
\end{array}\]
hence $\phi(\beta)=f$, which shows that $f\in\mathrm{Im}\,\phi$.

The first and last column are given by the 5-term exact sequence \ref{ss:first_column}, and the second column is given by Proposition \ref{prop:gr_pic_action}. For the third column, from \cite[Theorem 37.18]{book:Reiner2003}, we have the  exact sequence
\[\xymatrix@C+=1cm{
1 \ar@{->}[r] & \mathrm{Picent}(B) \ar@{->}[r]^{\subseteq} & \mathrm{Pic}_k(B)\ar@{->}[r]^{\bar{\sigma}} & \mathrm{Aut}_{k}(\mathcal{Z}),
}\]
where $\bar{\sigma}([P])={\sigma}_{P}:\mathcal{Z}\to\mathcal{Z}$ is an automorphism of $k$-algebras, determined by ${\sigma}_{P}(z)p=pz$ for all $p\in P$ and $z\in\mathcal{Z}$.

Note that ${}^{g}\sigma_P=\sigma_{{}^gP}$, for all $g\in G$. Indeed, if $g\in G$, $u_g\in \mathrm{hU}(A)\cap A_g$, $p\in P$ and $z\in\mathcal{Z}$, we have:
\[
\begin{array}{rcl}
     ({}^{g}\sigma_P)(z)(u_g\otimes_B p\otimes_B u^{-1}_g) & = & {}^{g}(\sigma_P({}^{g^{-1}}z))(u_g\otimes_B p\otimes_B u^{-1}_g) \\
     & = & {u}_{g}\sigma_P(u_g^{-1}zu_g)u_g^{-1}(u_g\otimes_B p\otimes_B u^{-1}_g) \\
     & = & {u}_{g}\sigma_P(u_g^{-1}zu_g)u_g^{-1}u_g\otimes_B p\otimes_B u^{-1}_g \\
     & = & {u}_{g}\sigma_P(u_g^{-1}zu_g)\otimes_B p\otimes_B u^{-1}_g \\
     & = & {u}_{g}\otimes_B \sigma_P(u_g^{-1}zu_g)p\otimes_B u^{-1}_g,\text{ since }\sigma_P(u_g^{-1}zu_g)\in\mathcal{Z}  \\
     & = & {u}_{g}\otimes_B p (u_g^{-1}zu_g)\otimes_B u^{-1}_g\\
     & = & {u}_{g}\otimes_B p \otimes_B (u_g^{-1}zu_g)u^{-1}_g\\
     & = & {u}_{g}\otimes_B p \otimes_B u_g^{-1}z\\
     & = & ({u}_{g}\otimes_B p \otimes_B u_g^{-1})z,
\end{array}
\]
which proves the claim. This implies that if $[P]\in\mathrm{Pic}_k(B)^G$, that is, $[P]={}^{g}[P]$, for all $g\in G$; then ${}^{g}\sigma_P=\sigma_P$, for all $g\in G$, so $\sigma_P\in \mathrm{Aut}_{k}(\mathcal{Z})^G$. Therefore, we define $\sigma:\mathrm{Pic}_k(B)^G\to \mathrm{Aut}_{k}(\mathcal{Z})^G$ to be the restriction of $\bar{\sigma}$ to $\mathrm{Pic}_k(B)^G$.  We clearly have that $\mathrm{Picent}(B)^{G/H} \subseteq \mathrm{Pic}_k(B)^G$, and it is obvious that the new sequence is exact at $\mathrm{Pic}_k(B)^G$.

By the definitions of the maps $\Phi$, $\Psi$, $\psi$ and $\Theta$, it is straightforward to see that the squares formed by the first two rows, and also the bottom right square are commutative. By the definition of $\Phi_k$, to the $1$-cocycle $\beta:G\to Z$ corresponds the $G$-graded $(A,A)$-bimodule $\tilde P$ with $1$-component $\tilde P_1=B^\beta=B$, where $bc_h=\beta(h)c_hb$ for all $b\in B$, $h\in H$ and $c_h\in \mathcal{C}_h$. This means that $\tilde\sigma_{\tilde P}(c_h)=  \beta(h)c_h=\phi(\beta)(c_h)$, hence the bottom left square is also commutative.


2)  Recall that the transgression map $\mathrm{Tg}:\mathrm{H}^1(H,Z)^{G/H}\to\mathrm{H}^2(G/H,Z)$ can be defined as follows.
For any $x\in G/H$, choose $s_x\in G$ such that $\mathrm{can}(s_x)=x$, where $\mathrm{can}:G\to G/H$ is the canonical epimorphism. Let $s:G/H\times G/H\to H$ be defined by $s_xs_y=s(xy)s_{xy}$, for all $x,y\in G/H$.
Let $\beta\in \mathrm{H}^1(H,Z)^{G/H}$, then $\beta\circ s:G/H\times G/H\to Z$ is a 2-cocycle and $\mathrm{Tg}([\beta])=[\beta\circ s]\in \mathrm{H}^2(G/H,Z)$.

We define the group homomorphism $\theta:\mathrm{H}^1(H,Z)^{G/H}\to \mathrm{Pic}^{\mathrm{gr}}_{\mathcal{Z}}(A_H)^{G/H}$. Let $P=B$; as in the definition of $\Phi$, the multiplications $\mu_g:A_{g^{-1}}\otimes_B B\otimes_B A_g \to B$, for $g\in H$, give a splitting for $\mathrm{hU}(E_H)$. Let $\beta\in \mathrm{H}^1(H,Z)^{G/H}$, and denote by $P^\beta$ the $G/H$-invariant $\Delta_H$-module extending $P$ corresponding to $\beta$. Let $\hat P^\beta = A_H\otimes_BP^\beta$, and define $\theta([\beta])=[\hat P^\beta]\in \mathrm{Pic}^{\mathrm{gr}}_{\mathcal{Z}}(A_H)^{G/H}$.

As in the proof of Theorem \ref{th:main_exact_sequnce}, consider the $G/H$-graded endomorphism algebra $\bar E^\beta=\mathrm{End}_\Delta(\Delta\otimes_{\Delta_H}P^\beta)^{\mathrm{op}}$ with 1-component $\mathcal{Z}$. This gives us the element $\Theta([A_H\otimes_B P^\beta])\in\mathrm{H}^2(G/H,Z)$, and it is easy to see that
$\Theta([A_H\otimes_B P^\beta])\in\mathrm{H}^2(G/H,Z)=\mathrm{Tg}([\beta]).$
\end{proof}

\begin{theorem} \label{th:butterfly}     Let $\hat{G}$ be another finite group and let $\hat{A}$ be a $\hat{G}$-graded crossed product with $1$-component $B$. Assume that the maps     $\varepsilon:\mathrm{hU}(A)\to \mathrm{Aut}_k(B)$     and      $\hat{\varepsilon}:\mathrm{hU}(\hat{A})\to \mathrm{Aut}_k(B)$  induced by conjugation satisfy $\mathrm{Im}\, \varepsilon=\mathrm{Im} \, \hat{\varepsilon}$. Denote $\hat{H}=\hat{G}[B]$. Then $G/H\simeq \hat{G}/\hat{H}$, and we have an isomorphism $\alpha:\mathrm{Picent}^{\mathrm{gr}}(A)\to\mathrm{Picent}^{\mathrm{gr}}(\hat{A})$ such that the following diagram is commutative:
        \[\xymatrix@C+=1cm{
1 \ar@{->}[r] & \mathrm{H}^1(G/H, Z) \ar@{->}[r]^{\Phi} \ar@{->}[d]^{\simeq}& \mathrm{Picent}^{\mathrm{gr}}(A)\ar@{->}[r]^{\Psi} \ar@{..>}[d]^{\alpha} & \mathrm{Picent}(B)^{G/H}\ar@{->}[r]^{\Theta} \ar@{=}[d] & \mathrm{H}^2(G/H, Z) \ar@{->}[d]^{\simeq}\\
1 \ar@{->}[r] & \mathrm{H}^1(\hat{G}/\hat{H}, Z) \ar@{->}[r]^{\hat{\Phi}} & \mathrm{Picent}^{\mathrm{gr}}(\hat{A})\ar@{->}[r]^{\hat{\Psi}}  & \mathrm{Picent}(B)^{\hat{G}/\hat{H}}\ar@{->}[r]^{\hat{\Theta}}  & \mathrm{H}^2(\hat{G}/\hat{H}, Z).
}\]

\begin{proof}  By the commutative diagram of \ref{ss:cvasihuge_diagram} and by our hypothesis that $\mathrm{Im}\,\varepsilon=\mathrm{Im}\,\hat{\varepsilon}$, we obtain $\mathrm{Im}\,\bar{\varepsilon}=\mathrm{Im}\,\bar{\hat{\varepsilon}}$, and hence the  group isomorphism $G/H\simeq \hat{G}/\hat{H}.$ This induces the isomorphisms $\mathrm{H}^i(G/H, Z)\simeq \mathrm{H}^i(\hat{G}/\hat{H}, Z)$, for $i=1,2$, and we will identify these groups.

Let $[\tilde{P}]\in \mathrm{Picent}^{\mathrm{gr}}(A)$, and let $P:=\tilde{P}_1$, hence $[P]=\Psi([\tilde{P}])\in\mathrm{Picent}(B)$, and $\tilde{P}\simeq A\otimes_B P$. Consider the diagram
\[    \xymatrix@C+=2cm@R+0.6cm{
\hat{A} \ar@{.}@/^-2.0pc/[rrr]_{\sim}^{\tilde{\hat{P}}=\hat{A}\otimes_B P} & A \ar@{-}[r]^{\tilde{P}=A\otimes_B P}_{\sim} & A & \hat{A}\\
	\hat{C}=\hat{A}_{\hat{H}} \ar@{-}@/^-2.0pc/[rrr]_{\sim}^{\hat{C}\otimes_B P} \ar@{-}[u] & C=A_H \ar@{-}[u] \ar@{-}[r]_{\sim}^{C\otimes_B P}& C=A_H \ar@{-}[u] & \hat{C}=\hat{A}_{\hat{H}} \ar@{-}[u]\\
	 &B \ar@{-}[u] \ar@{-}[r]^{P=\tilde{P}_1}_{\sim} \ar@{-}[ul]& B. \ar@{-}[u] \ar@{-}[ur]&
    }\]
By Proposition \ref{prop:eq}, we obtain that $C\otimes_B P\in\mathrm{Picent}^{\mathrm{gr}}(C)$, and that $\hat{C}\otimes_B P\in\mathrm{Picent}^{\mathrm{gr}}(\hat{C})$.

Denote $\hat\Delta=\Delta(\hat{A}\otimes \hat{A}^{\mathrm{op}})$, hence $\hat\Delta_{\hat H}=\Delta(\hat{C}\otimes \hat{C}^{\mathrm{op}})$. We already know by Proposition \ref{prop:eq} that $P$ is a $\hat\Delta_{\hat H}$-module, and we need to define a $\Delta(\hat{A}\otimes \hat{A}^{\mathrm{op}})$-module structure on $P$.    Let $\mathcal{T}$ be a complete set of representatives for the cosets of $H$ in $G$. The isomorphism $G/H\simeq \hat{G}/\hat{H}$ gives us a complete set of representatives for the cosets of $\hat{H}$ in $\hat{G}$ such that $t\in\mathcal{T}$ corresponds to $\hat{t}\in\hat{\mathcal{T}}$. Moreover, by \ref{ss:cvasihuge_diagram}, \[G/H\simeq \mathrm{hU}(A)/\mathrm{U}(B)\mathrm{C}_{\mathrm{hU}(A)}(B)\] and
\[\hat{G}/\hat{H}\simeq \mathrm{hU}(\hat{A})/\mathrm{U}(B)\mathrm{C}_{\mathrm{hU}(\hat{A})}(B),\]
so we can find invertible homogeneous elements $u_t\in \mathrm{hU}(A)\cap A_t$ and $\hat{u}_{\hat{t}}\in\mathrm{hU}(\hat{A})\cap \hat{A}_{\hat{t}}$ such that $\varepsilon(u_t)=\hat{\varepsilon}(\hat{u}_{\hat{t}})$.

Since $[P]\in\mathrm{Picent}(B)^{G/H}$, we have that for all $t\in\mathcal{T}$, $u_t\otimes P\otimes u^{-1}_t\simeq P$     as $\Delta_H$-modules. This means that there is a bijection $f_t:P\to P$ such that this isomorphism is given by $u_t\otimes p\otimes u^{-1}_t\mapsto f_t(p)$. Since $\varepsilon(u_t)=\hat{\varepsilon}(\hat{u}_{\hat{t}})$, we obtain that the map
\[\hat{u}_{\hat{t}}\otimes P\otimes \hat{u}^{-1}_{\hat{t}}\to P, \qquad \hat{u}_{\hat{t}}\otimes p\otimes \hat{u}^{-1}_{\hat{t}}\mapsto f_t(p)\]
is also an isomorphism of $\hat\Delta_{\hat H}$-modules. It follows that $[P]\in\mathrm{Picent}(B)^{\hat{G}/\hat{H}}$, and moreover, $\Theta([P])=\hat\Theta([P])$. This argument also shows that the third square is commutative.

In our situation, $\Theta([P])=1$, hence $\hat\Theta([P])=1$. It follows that we may assume that the elements $u_t$ give a splitting for $\mathrm{hU}(\bar{E})$, where $\bar{E}=\mathrm{End}_{\Delta}(\Delta\otimes_{\Delta_H}P)^{\mathrm{op}}$, and the elements $\hat{u}_{\hat{t}}$ give a splitting for $\mathrm{hU}(\bar{\hat{E}})$, where $\bar{\hat{E}}=\mathrm{End}_{\hat{\Delta}}(\hat{\Delta}\otimes_{\hat{\Delta}_{\hat{H}}}P)^{\mathrm{op}}$. By Dade's theorem \cite[Theorem 2.8]{art:Dade1981} the latter induces a $\hat{\Delta}$-module structure on $P$, hence a structure of a $\hat{G}$-graded $(\hat{A},\hat{A})$-bimodule over $\hat{\mathcal{C}}$ on $\tilde{\hat{P}}:=\hat{A}\otimes_B P$. Then, $[\tilde{\hat{P}}]\in\mathrm{Picent}^{\mathrm{gr}}(\hat{A})$, and we define $\alpha([\tilde{P}])=[\tilde{\hat{P}}]$.

It is clear that $\alpha$ is a well-defined group isomorphism, and that the second square is commutative. By the definition of $\Phi$, it also follows that the first square is commutative as well.
 \end{proof}
\end{theorem}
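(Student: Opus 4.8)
The plan is to first identify $G/H$ with $\hat G/\hat H$ together with the relevant cohomology groups, then to construct $\alpha$ by transporting from $A$ to $\hat A$ the Dade-theoretic datum attached to the $1$-component of a $G$-graded bimodule over $\mathcal{C}$, and finally to verify that $\alpha$ is a group isomorphism and that the three squares commute. For the first point, I would apply the canonical projection $\pi\colon\mathrm{Aut}_k(B)\to\mathrm{Out}_k(B)$ to the hypothesis $\mathrm{Im}\,\varepsilon=\mathrm{Im}\,\hat\varepsilon$: since $\bar\varepsilon$ and $\bar{\hat\varepsilon}$ are induced by $\varepsilon$ and $\hat\varepsilon$, this gives $\mathrm{Im}\,\bar\varepsilon=\mathrm{Im}\,\bar{\hat\varepsilon}$, whence by \ref{ss:cvasihuge_diagram} the isomorphism $G/H\simeq\mathrm{Im}\,\bar\varepsilon=\mathrm{Im}\,\bar{\hat\varepsilon}\simeq\hat G/\hat H$. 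As $Z=\mathrm{U}(\mathrm{Z}(B))$ depends only on $B$ and, by Remark~\ref{ss:G_action}, the $G/H$-action on $Z$ is the one obtained by restricting the automorphisms in $\mathrm{Im}\,\varepsilon$ to $\mathrm{Z}(B)$ (inner automorphisms acting trivially there), this isomorphism is compatible with the module structures and therefore induces isomorphisms $\mathrm{H}^i(G/H,Z)\simeq\mathrm{H}^i(\hat G/\hat H,Z)$ for $i=1,2$; I identify these groups from now on, and they will be the outer vertical maps of the diagram.

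To construct $\alpha$, given $[\tilde P]\in\mathrm{Picent}^{\mathrm{gr}}(A)$ I would put $P:=\tilde P_1$, so $[P]=\Psi([\tilde P])\in\mathrm{Picent}(B)^{G/H}$ and $\tilde P\simeq A\otimes_B P$. By Proposition~\ref{prop:eq} applied to $\hat A$, the bimodule $P$ over $\mathcal{Z}$ already carries a canonical $\Delta(\hat C\otimes_k\hat C^{\mathrm{op}})$-module structure, with $\hat C=\hat A_{\hat H}$; the task is to promote it to a $\hat\Delta$-module structure, $\hat\Delta=\Delta(\hat A\otimes_k\hat A^{\mathrm{op}})$. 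I would fix a transversal $\mathcal{T}$ of $H$ in $G$, use $G/H\simeq\hat G/\hat H$ to obtain a matching transversal $\hat{\mathcal{T}}$ of $\hat H$ in $\hat G$ with $t\leftrightarrow\hat t$, and then, exploiting $G/H\simeq\mathrm{hU}(A)/\mathrm{U}(B)\mathrm{C}_{\mathrm{hU}(A)}(B)$ and the analogous description for $\hat A$, choose $u_t\in\mathrm{hU}(A)\cap A_t$ and $\hat u_{\hat t}\in\mathrm{hU}(\hat A)\cap\hat A_{\hat t}$ with $\varepsilon(u_t)=\hat\varepsilon(\hat u_{\hat t})$ — this is possible since $\mathrm{Im}\,\varepsilon=\mathrm{Im}\,\hat\varepsilon$ and, the $\mathrm{Out}$-classes of $\varepsilon(u_t)$ and $\hat\varepsilon(\hat u_{\hat t})$ agreeing, the remaining discrepancy is an inner automorphism of $B$, absorbed by multiplying $u_t$ by a suitable element of $\mathrm{U}(B)$. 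The $G/H$-invariance of $[P]$ yields, for each $t$, a bijection $f_t\colon P\to P$ realizing $u_t\otimes_B P\otimes_B u_t^{-1}\simeq P$ as $\Delta_H$-modules; because $\varepsilon(u_t)=\hat\varepsilon(\hat u_{\hat t})$, the same $f_t$ realizes $\hat u_{\hat t}\otimes_B P\otimes_B\hat u_{\hat t}^{-1}\simeq P$ as $\hat\Delta_{\hat H}$-modules, so $[P]\in\mathrm{Picent}(B)^{\hat G/\hat H}$ and, comparing the resulting $2$-cocycles built from the $f_t$ on the two sides, $\Theta([P])=\hat\Theta([P])$ — this is the commutativity of the third square. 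Since $[\tilde P]\in\mathrm{Picent}^{\mathrm{gr}}(A)$, exactness of Theorem~\ref{th:main_exact_sequnce} forces $\Theta([P])=1$, hence $\hat\Theta([P])=1$; by Dade's theorem \cite[Theorem~2.8]{art:Dade1981} the elements $\hat u_{\hat t}$ may then be adjusted to a splitting of $\mathrm{hU}(\bar{\hat E})$, where $\bar{\hat E}=\mathrm{End}_{\hat\Delta}(\hat\Delta\otimes_{\hat\Delta_{\hat H}}P)^{\mathrm{op}}$, producing a $\hat\Delta$-module structure on $P$, i.e. a $\hat G$-graded $(\hat A,\hat A)$-bimodule structure over $\hat{\mathcal{C}}$ on $\tilde{\hat P}:=\hat A\otimes_B P$ by \cite[Theorem~3.3]{art:MM2}. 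I would set $\alpha([\tilde P]):=[\tilde{\hat P}]$.

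It then remains to check that $\alpha$ is a well-defined group isomorphism and that the two other squares commute. For well-definedness, isomorphic $G$-graded bimodules over $\mathcal{C}$ give isomorphic $\Delta$-modules after taking $1$-components, hence the same $Z$-conjugacy class of splittings, so the transported $\hat\Delta$-module structure depends only on $[\tilde P]$, not on $\tilde P$ within its class nor on the choices of $u_t,\hat u_{\hat t}$. Running the same construction from $\hat A$ to $A$ with the inverse transversal matching and the same elements $\varepsilon(u_t)=\hat\varepsilon(\hat u_{\hat t})$ gives a two-sided inverse for $\alpha$, and multiplicativity follows from $(\tilde P\otimes_A\tilde Q)_1\simeq P\otimes_B Q$ as $\Delta$-modules together with the compatibility of the equivalences of Section~3 with tensor products. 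The square with $\Psi$ and $\hat\Psi$ commutes because $\hat\Psi(\alpha([\tilde P]))=[\tilde{\hat P}_1]=[P]=\Psi([\tilde P])$; the square with $\Phi$ and $\hat\Phi$ commutes because, by the description of $\Phi$ in the proof of Theorem~\ref{th:main_exact_sequnce}, a class $[\gamma]$ is sent to the graded bimodule with $1$-component $B$ carrying the $\gamma$-twisted $\Delta$-structure, whose transport is $\hat A\otimes_B B$ with the $\gamma$-twisted $\hat\Delta$-structure, namely $\hat\Phi([\gamma])$.

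The main obstacle is the matching of the two Dade-theoretic data in the middle step. The bimodule $P$ is literally the same $(B,B)$-bimodule over $\mathcal{Z}$ for both $A$ and $\hat A$, but its module structures over $\Delta$ and over $\hat\Delta$ are a priori unrelated; the bridge is Dade's correspondence identifying these structures with $Z$-conjugacy classes of splittings of the group extensions $1\to Z\to\mathrm{hU}(\bar E)\to G/H\to1$ (with $\bar E=\mathrm{End}_\Delta(\Delta\otimes_{\Delta_H}P)^{\mathrm{op}}$) and $1\to Z\to\mathrm{hU}(\bar{\hat E})\to\hat G/\hat H\to1$, and the hypothesis $\mathrm{Im}\,\varepsilon=\mathrm{Im}\,\hat\varepsilon$ is exactly what puts these two extensions into correspondence under the identification of the first step, compatibly with their projections to the quotient groups. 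Making this correspondence explicit — so that in particular the $2$-cocycle computing $\Theta([P])$ is seen to be literally the same on both sides — is where the real work lies; once it is in place, the rest is a matter of unwinding definitions.
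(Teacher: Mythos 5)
Your proposal is correct and follows essentially the same route as the paper's proof: identify $G/H\simeq\hat G/\hat H$ via the diagram of \ref{ss:cvasihuge_diagram}, transport the $\Delta$-module structure on $P=\tilde P_1$ to a $\hat\Delta$-module structure by matching transversals and homogeneous units with $\varepsilon(u_t)=\hat\varepsilon(\hat u_{\hat t})$, invoke Dade's theorem after checking $\hat\Theta([P])=\Theta([P])=1$, and then verify the three squares. The extra justifications you supply (why units with matching $\varepsilon$-images exist, and why the identification of cohomology groups respects the module structures) are points the paper leaves implicit, not a departure from its argument.
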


\subsection*{Acknowledgement} This research is supported by a grant of the Ministry of Research, Innovation and Digitization, CNCS/CCCDI--UEFISCDI, project number PN-III-P4-ID-PCE-2020-0454, within PNCDI III.


\end{document}